\newcommand{\mbR}{\mathbb{R}}
\newcommand{\E}{\mathbf{E}}
\renewcommand{\P}{\mathbf{P}}
\renewcommand {\epsilon}{\varepsilon}
\theoremstyle{plain}
\newtheorem{thm}{Theorem}[section]
\newtheorem{lem}[thm]{Lemma}
\newtheorem{cor}[thm]{Corollary}
\theoremstyle{definition}
\newtheorem{rem}[thm]{Remark}
\DeclareMathSymbol{\ophi}{\mathalpha}{letters}{"1E}
\newcommand{\e}{\varepsilon}
\newcommand{\ve}{\varepsilon}
\renewcommand{\phi}{\varphi}
\newcommand{\be}{\begin{equation}}
\newcommand{\ee}{\end{equation}}
\newcommand{\ben}{\begin{equation*}}
\newcommand{\een}{\end{equation*}}
\newcommand{\ba}{\begin{equation}\begin{aligned}}
\newcommand{\ea}{\end{aligned}\end{equation}}
\DeclareMathOperator{\Law}{Law}
\renewcommand{\i}{\mathrm{i}}
\newcommand{\ex}{\mathrm{e}}
\newcommand{\di}{\mathrm{d}}
\newcommand{\cF}{\mathcal{F}}
\newcommand{\bI}{\mathbb{I}}
\newcommand{\bR}{\mathbb{R}}
\newfont{\cyrfnt}{wncyr10}
\def\J3{\cyrfnt{\rm \u{\cyrfnt I}}}
\def\j3{\cyrfnt{\rm \u{\cyrfnt i}}}
\let\oldmarginpar\marginpar
\renewcommand{\marginpar}[1]{\oldmarginpar{\scriptsize\texttt{\color{red}{#1}}}}
\numberwithin{equation}{section}
\begin{document}
%\layout
\title{Generalized selection problem with L\'evy noise}

\date{\today}%\footnote{{\jobname}.tex} }

% \date{\today\footnote{{\jobname}.tex\hfill \textbf{Preliminary version!!! Do not distribute!!!}}}

\author{Ilya Pavlyukevich\footnote{Institute for Mathematics, Friedrich Schiller University Jena, Ernst--Abbe--Platz 2,
07743 Jena, Germany; ilya.pavlyukevich@uni-jena.de} \ 
 and 
Andrey Pilipenko\footnote{Institute of Mathematics, National Academy of Sciences of Ukraine, Tereshchenkivska Str.\ 3, 01601, Kiev, 
Ukraine} \footnote{National Technical University of Ukraine 
``Igor Sikorsky Kyiv Polytechnic Institute'',
ave.\ Pobedy 37, Kiev 03056, Ukraine; pilipenko.ay@gmail.com}}

% Berechnung f\"ur den Versorgungsausgleich zum 1. Mai 2020

\maketitle

\begin{abstract}
Let $A_\pm>0$, $\beta\in(0,1)$, and let $Z^{(\alpha)}$ be a strictly $\alpha$-stable L\'evy process with
the jump measure $\nu(\di z)=(C_+\bI_{(0,\infty)}(z)+ C_-\bI_{(-\infty,0)}(z))|z|^{-1-\alpha}\,\di z$, $\alpha\in (1,2)$, $C_\pm\geq 0$, $C_++C_->0$.
The selection problem for the \emph{model} stochastic differential equation $\di \bar X^\e=(A_+\bI_{[0,\infty)}(\bar X^\e)- A_-\bI_{(-\infty,0)}(\bar X^\e))|\bar X^\e|^\beta \,\di t+\e \di Z^{(\alpha)}$ states that in the small noise limit $\e\to 0$, solutions $\bar X^\e$ converge weakly 
to the maximal or minimal solutions of the limiting non-Lipschitzian ordinary 
differential equation $\di \bar x=(A_+\bI_{[0,\infty)}(\bar x)- A_-\bI_{(\infty,0)}(\bar x))|\bar x|^\beta \,\di t$ with probabilities
$\bar p_\pm=\bar p_\pm(\alpha,C_+/C_-,\beta, A_+/A_-)$, see [Pilipenko and Proske, Stat.\ Probab.\ Lett., 132:62--73, 2018]. 
In this paper we solve the generalized 
% \marginpar{I: explain in the text why $C_+/C_+$, $A_+/A_-$}
selection problem for the stochastic differential equation $\di X^\e=a(X^\e)\,\di t+\e b(X^\e)\,\di Z$ 
whose dynamics in the vicinity of the origin in certain sense
reminds of dynamics of the model equation. In particular we show that solutions $X^\e$ also converge to the 
maximal or minimal solutions of the limiting irregular ordinary 
differential equation $\di x=a(x) \,\di t$ with the same \emph{model} selection probabilities
$\bar p_\pm$. This means that for a large class of irregular stochastic differential equations, the selection dynamics
is completely determined by four local parameters of the drift and the 
jump measure.
\end{abstract}

\noindent
\textbf{Keywords:} L\'evy process; stochastic differential equation; 
selection problem; zero noise limit; Peano theorem; non-uniqueness; irregular drift

\smallskip

\noindent
\textbf{2010 Mathematics Subject Classification:}
60H10$^*$ Stochastic ordinary differential equations; 
60F05 Central limit and other weak theorems;
60G51 Processes with independent increments; L\'evy processes;
34A12 Initial value problems, existence, uniqueness, continuous dependence and continuation of solutions;
34E10 Perturbations, asymptotics;
34F05  Equations and systems with randomness

\tableofcontents

 \section{Introduction, setting, and the main result}

The well known Peano existence theorem \cite[Theorem II.2.1]{Hartman-64} states 
that an ordinary differential equation (ODE) $\di x= a(x)\,\di t$ with a continuous function $a\colon \bR\to\bR$
has a local solution which however may be not unique. A classical example of such
non-uniqueness is given by the non-Lipschitzian $\di x= \sqrt{|x|}\,\di t$ 
which allows for a continuum of solutions starting at $x=0$, namely
$x(t)\equiv 0$, and $x(t)=\frac14(t-t_0)_+^2$, $t_0\geq 0$.

On the contrary, the behaviour of stochastic differential equations (SDE) is often more regular. In particular  
and addition of a noise term allows to obtain unique solutions of SDEs with measurable or irregular coefficients. 
We refer the reader to e.g.\  
\cite{zvonkin74,StrVar,veretennikov1981strong,krylov2005strong} for results on SDEs driven by a Brownian motion.
General results on the existence and uniqueness of SDEs with measurable or irregular coefficients driven by L\'evy processes can be found, e.g.\ in \cite{TanTsuWat74,GihSko-82,Situ05,Priola-12,chen2016uniqueness,priola2018davie,kulik2019weak}.

Consider now an SDE with a drift $a$ and assume that the underlying ODE $\di x=a(x)\,\di t$ has multiple solutions. 
A natural question arises, what happens when the random perturbation vanishes. Heuristically, solutions of the small noise SDE 
should converge to one of the various deterministic solutions and the \emph{selection problem} consists in description of this limit behaviour.

Originally, the selection problem was treated by \cite{BaficoB-82}, where the authors considered the SDE
\ba
X^\e(t)= x+ \int_0^t a(X^\e(s))\,\di s+ \e\int_0^t b(X^\e {(s)})\,\di W(s),
\ea
with a  drift $a$ being not Lipschitz continuous at $x=0$, and a positive Lipschitz continuous diffusion coefficient $b^2$. They showed that under
certain conditions, the limit law $\Law(X^\e|X^\e(0)=0)$ is supported by the deterministic maximal and minimal solutions of the ODE $\di x=a(x)\,\di t$
starting at zero
with the weights $p_\pm$ that can be 
explicitly determined, see \cite[Theorem 4.1]{BaficoB-82}.
\cite{veretennikov1983approximation} proved the uniqueness of the limit in the case of odd continuous concave drift and additive noise.
Recently \cite{delarue2014transition} gave the new proof of the results by \cite{BaficoB-82} for the piece-wise power drift 
\ba
\label{e:bar_a}
\bar a(x)=A_+ x^\beta\bI_{[0,\infty)}(x)- A_-|x|^\beta\bI_{(-\infty,0)}(x)
\ea
with $\beta\in (0,1)$ and $A_\pm>0$
in the case of additive Brownian perturbations. \cite{trevisan2013zero} studied the same equation with $A_\pm=1$ and $\beta\in[0,1)$. 
\cite{krykun2013peano} generalized the results by \cite{BaficoB-82} to It\^o SDEs with positive diffusion coefficient $a$ of locally bounded variation.
\cite{gradinaru2001singular} analyzed large deviations of the laws of $X^\e$ with additive noise and piece-wise power drift 
\eqref{e:bar_a} with $A_\pm=1$, $\beta\in[0,1)$.

Although the results obtained in \cite{BaficoB-82} are very transparent and intuitively understandable, the intrinsic nature of the selection phenomena and 
especially the methods allowing one to derive the \emph{selection probabilities} $p_\pm$ in more general settings are far from being 
completely understood.

Thus, \cite{pilipenko2018perturbations} considered a class of SDEs $\di X^\e=\bar a(X^\e)\,\di t+\e \di B^{(\alpha)}$
with $\beta\in(-1,1)$ driven by $\alpha$-self-similar processes $B^{(\alpha)}$, e.g.\ by a fractional Brownian motion 
or a strictly stable L\'evy process. They showed that under some natural assumptions $X^\e$ also selects the maximal and minimal solutions of the ODE $\di x=\bar a(x)\,\di t$
with some probabilities $p_\pm$, $p_++p_-=1$. Unfortunately these probabilities cannot be always determined explicitly.

In this paper we address the selection problem for a L\'evy driven SDE with multiplicative noise 
\ba
\label{eq:main_small}
X^\e_x( t)= x+ \int_0^t a(X_x^\e(s))\,\di s+ \e\int_0^t b(X_x^\e(s-))\,\di Z(s),\quad t\geq 0,\quad \e\to 0,
\ea
whose drift $a=a(x)$ has an irregular point at $x=0$ but does not have the exact piece-wise power 
form \eqref{e:bar_a}. The small jumps of the driving L\'evy process $Z$ remind of those
of an $\alpha$-stable L\'evy process.
In other words we answer the question whether the selection dynamics are robust w.r.t.\ perturbations of the drift and the noise.  

Let us formulate the precise assumptions.

\smallskip

\noindent\textbf{A}$_Z$:
Let $Z=(Z_t)_{t\geq 0}$ be a L\'evy process without a Gaussian component
and the jump measure $\nu$ such that for some $\alpha\in (1,2)$ and some constants $C_\pm\geq 0$, $C_-+C_+>0$,
\ba
\label{eq:coeff_assumptions}
&\nu([z,+\infty))\sim C_+ z^{-\alpha} l_\nu\Big(\frac{1}{z}\Big),\\
&\nu((-\infty,-z])\sim C_- z^{-\alpha} l_\nu\Big(\frac{1}{z}\Big),\quad z\to +0,\\
\ea
for a positive function $l_\nu$ slowly varying at infinity.

\medskip

\noindent\textbf{A}$_{a}$:
Let $x\mapsto a(x)$ be a real valued continuous function of linear growth such that $a(0)=0$ and that for $\beta\in(0,1)$
\ba
\label{eq:ass_a}
a(x)=  x^\beta L_+(x)\quad \text{for} \quad x>0 \quad \text{and} \quad 
a(x)= - |x|^\beta L_-(|x|)\quad  \text{for} \quad x<0,
\ea
with continuous functions $L_\pm\colon (0,\infty)\to(0,\infty)$ that satisfy
\ba
\label{eq:L_pm}
L_\pm (x) \sim A_\pm  l\Big(\frac{1}{x}\Big)  \quad  \text{as }    x\to +0,
\ea
for a positive function $l$ slowly varying at infinity, and $A_\pm>0$.

\medskip

\noindent
\textbf{A}$_{b}$: 
Let $x\mapsto b(x)$ be a bounded continuous real valued function such that
\ba
b(0)>0.
\ea

\medskip

\noindent
It follows from assumptions \textbf{A}$_{a}$, \textbf{A}$_{b}$ that 
equation \eqref{eq:main_small} has a weak solution, see, e.g.\  Theorem 1 of \S 2 Chapter 5 in \cite{GihSko-82}. 
\begin{rem}
We will see in the main result that the weak limit of the sequence $\{X^\ve\}$
as $\ve\to 0$ is independent of the choice of weak  solution $X^\e$. So, further we assume that
 $X^\ve$ is any weak solution to \eqref{eq:main_small}.
It should be also noticed that the presence of a noise often implies uniqueness of a solution and the strong Markov property, see references above. 
\end{rem}

\medskip

Let us describe solutions of the limit ODE
\ba
\label{e:ode}
X^0_x(t)=x+ \int_0^t a(X^0_x(s))\,\di s.
\ea
Let $A_\pm(\cdot)$ be continuous non-negative strictly increasing functions given by
\ba\label{eq:A+_def}
&A_+(x):= \int_{0+}^x\frac{\di y}{a(y)},\ x> 0,\\
&A_-(x):= \int^x_{0-}\frac{\di y}{a(y)},\ x< 0,\\
&A_\pm(0)=0, 
\ea
and let $A_\pm^{-1}(\cdot)\colon [0,\infty)\to [0,\infty)$ be their inverses. All these functions are well defined 
because of assumption \textbf{A}$_{a}$.
Hence it is immediate to see that for $x\neq 0$
\ba
\label{e:X0+}
X^0_x(t):= A_+^{-1}(A_+(x)+t),\quad x>0,\ t\geq 0,
\ea
and 
\ba
\label{e:X0-}
X^0_x(t):= -A_-^{-1}(A_-(x)+t),\quad  x< 0,\ t\geq 0,
\ea
are unique solutions of the equation \eqref{e:ode}. 
For $x=0$, there is a continuum of solutions and any solution either has the form 
\ba
\label{eq:solutionODE}
X^0_\pm (t; t_0)=\begin{cases}
         0,\quad t\in[0,t_0),\\
        \pm A_\pm^{-1}(t-t_0), \quad t\in [t_0,\infty),
         \end{cases}
\ea
where $t_0\in[0,+\infty)$ or is trivial $X^0(t)\equiv 0$. 
Among the solutions \eqref{eq:solutionODE} we single out the maximal and the minimal solutions 
\ba
\label{e:eq_pm}
x^\pm(t):= X^0_\pm (t; 0)=\pm A_\pm^{-1}(t), \quad t\geq 0.
\ea
It is intuitively clear that any solution $X^\e$ starting at zero should select one of the particular solutions $x^\pm$ of \eqref{e:ode}
very quickly, so that one can expect that the selection is determined only by 
the small jumps of $Z$ and the local behaviour of $a(\cdot)$ and $b(\cdot)$ in the vicinity
of zero. Taking into account assumptions  \textbf{A}$_{Z}$ and \textbf{A}$_{a}$
we introduce the auxiliary \emph{model} SDE
\ba
\label{e:model}
\bar X^\e(t)=\int_0^t \bar a(\bar X^\e(s))\,\di s+\e Z^{(\alpha)}(t)
\ea
with the piece-wise power drift $\bar a$ defined in \eqref{e:bar_a} and   
driven by a zero mean strictly $\alpha$-stable L\'evy process $Z^{(\alpha)}$, $\alpha\in (1,2)$,
with the characteristic function
\ba\label{eq:char_stable}
&\ln \E \ex^{\i \lambda Z^{(\alpha)}(1)}= \int (\ex^{\i \lambda z}-1-\i \lambda z)\nu^{(\alpha)}(\di z),\quad \lambda\in\bR,\\
\ea
and the jump measure
\ba
\label{eq:nu_alpha}
\nu^{(\alpha)}([z,+\infty))= C_+ z^{-\alpha},  \quad
\nu^{(\alpha)}((-\infty,-z])= C_- z^{-\alpha},\ z>0 .
\ea
The model equation \eqref{e:model} has a unique strongly Markovian solution due to Theorem 3.1
from \cite{TanTsuWat74} (although in \cite{TanTsuWat74} the drift is supposed to be bounded, an extension of their results
to $\bar a$ given by \eqref{e:bar_a} follows 
easily from the sublinear growth of $\bar a$ at infinity).

The \emph{model} ODE $\di x=\bar a(x)\,\di t$ has the following maximal and minimal solutions starting at $x=0$:
\be
\label{e:barx}
\bar x^\pm(t)=\pm\big( A_\pm(1-\beta)t\big)^{\frac{1}{1-\beta}},\quad t\geq 0.
\ee

The selection problem 
for the model SDE \eqref{e:model} was solved by \cite{pilipenko2018perturbations}.
 
\begin{thm}[\cite{pilipenko2018perturbations}]
\label{thm:PP}
Let $\bar X^\ve$ be a solution to the model equation 
\eqref{e:model}.
 Then 

1) for any $\ve>0$  
\ba
\label{eq:Xto infty}
&\P\Big(\lim_{t\to\infty }|\bar X^\ve(t)|=+\infty\Big)=1
\ea
and the probabilities
\ba
\label{eq:p+-}
&\bar p_\pm=\P\Big(\lim_{t\to\infty }\bar X^\ve(t)=\pm \infty\Big)
\ea
are independent of $\ve$, and   $\bar p_-+ \bar p_+=1$;

2) the convergence 
\ba
\Law \bar X^\e \Rightarrow \bar  p_- \delta_{\bar x^-}+  \bar p_+\delta_{\bar x^+} ,\quad\e\to 0,
\ea
in $D([0,\infty),\bR)$ holds true, where $\bar x^\pm$ are defined in \eqref{e:barx}. 
\end{thm}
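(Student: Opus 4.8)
The plan is to reduce everything, via the exact self-similarity of the model problem, to a transience statement for one $\e$-free process, and then to run a zero--one argument.

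\emph{Step 1 (scaling reduction).} Since $\bar a$ is positively homogeneous of degree $\beta$ and $Z^{(\alpha)}$ is strictly $\alpha$-stable, hence $1/\alpha$-self-similar, I would first verify by direct substitution (using uniqueness in law for \eqref{e:model}) that, if $\widetilde X$ denotes the unique strong Markov solution of the model equation \eqref{e:model} with $\e=1$, then
\be
\label{e:scaling_reduction}
\bigl(\bar X^\e(t)\bigr)_{t\ge0}\ \stackrel{d}{=}\ \bigl(\e^{\gamma}\widetilde X(\e^{-\rho}t)\bigr)_{t\ge0},\qquad
\gamma=\frac{\alpha}{\alpha-1+\beta},\quad \rho=\frac{\alpha(1-\beta)}{\alpha-1+\beta},
\ee
the two exponents being forced by $\gamma(1-\beta)=\rho$ and $\gamma-\rho/\alpha=1$ (note $\alpha-1+\beta>0$, so $\gamma,\rho>0$). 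The same computation shows the deterministic curves are scale-invariant: $\e^{\gamma}\bar x^{\pm}(\e^{-\rho}t)=\bar x^{\pm}(t)$. From \eqref{e:scaling_reduction}, $\P(\lim_{t\to\infty}\bar X^\e(t)=\pm\infty)=\P(\lim_{s\to\infty}\widetilde X(s)=\pm\infty)$, which visibly does not depend on $\e$; call these numbers $\bar p_{\pm}$. It then remains to prove (a) $\P(\lim_{s\to\infty}|\widetilde X(s)|=\infty)=1$ and (b) the weak convergence of part~2.

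\emph{Step 2 (transience of $\widetilde X$).} For $|x|$ large the drift $\pm A_\pm|x|^{\beta}$ points outward, and since $\alpha>1>1-\beta$ the zero-mean $\alpha$-stable noise is too weak to overcome it. I would make this quantitative through a Lyapunov/supermartingale argument: exhibit a bounded function $g$, strictly decreasing in $|x|$, with $\mathcal{L}g<0$ on $\{|x|>R\}$ ($\mathcal{L}$ the generator of $\widetilde X$), so that $g(\widetilde X_{t\wedge\tau_R})$ is a bounded supermartingale; this yields, for every $\eta>0$, an $R$ with $\P_x\bigl(\lim_{s\to\infty}\widetilde X(s)=(\sgn x)\cdot\infty\bigr)\ge 1-\eta$ for all $|x|\ge R$. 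On the other hand $\widetilde X$ cannot remain in a fixed compact set forever, because there the drift acts at bounded rate while $Z^{(\alpha)}$ oscillates unboundedly; hence on $\{\widetilde X\not\to\pm\infty\}$ the process re-enters $\{|x|\ge R\}$ at arbitrarily large times. Applying the strong Markov property at the successive entrance times, together with the far-field bound and the Borel--Cantelli lemma, shows $\{\widetilde X\not\to\pm\infty\}$ is null, so $\P(\lim_s|\widetilde X(s)|=\infty)=1$ and $\bar p_-+\bar p_+=1$; pushing this back through \eqref{e:scaling_reduction} gives \eqref{eq:Xto infty}--\eqref{eq:p+-}. This step is the technical heart: because $Z^{(\alpha)}$ has heavy tails, one large jump can carry $\widetilde X$ from the far field back near the origin, so the escape probability is only $1-\eta$ rather than $1$; quantifying the decay of this error (where $\alpha>1-\beta$ enters), choosing $g$ accordingly, and combining cleanly with the no-trapping observation and the strong Markov property to upgrade positive escape probability to full probability is the main obstacle.

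\emph{Step 3 (weak convergence).} Working on the coupling \eqref{e:scaling_reduction}, fix a realization of $\widetilde X$. On $\{\widetilde X\to+\infty\}$ (probability $\bar p_+$) there is a finite random time after which $\widetilde X>0$; since $\alpha>1$, the strong law for L\'evy processes with finite mean gives $Z^{(\alpha)}(s)=o(s)$, so comparing $\widetilde X$ with the deterministic flow of $\bar a$ and using that $\bar x^+$ is a power function of exponent $1/(1-\beta)>1$ (whence the noise is of strictly lower order) yields $\widetilde X(s)/\bar x^+(s)\to1$ as $s\to\infty$. Combined with $\e^{\gamma}\bar x^+(\e^{-\rho}t)=\bar x^+(t)$ this gives $\e^{\gamma}\widetilde X(\e^{-\rho}\cdot)\to\bar x^+(\cdot)$ uniformly on compacts (the region $t$ near $0$ controlled by a path-wise bound $|\widetilde X(s)|\le C_\omega(1+\bar x^+(s))$), hence in $D([0,\infty),\bR)$, almost surely on $\{\widetilde X\to+\infty\}$; symmetrically $\bar X^\e\to\bar x^-$ a.s.\ on $\{\widetilde X\to-\infty\}$. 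Since by Step~2 these two events partition the sample space up to a null set, bounded convergence yields $\Law\bar X^\e\Rightarrow\bar p_-\delta_{\bar x^-}+\bar p_+\delta_{\bar x^+}$, as claimed. Steps~1 and~3 are comparatively routine; the weight of the proof lies in Step~2.
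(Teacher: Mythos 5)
Theorem \ref{thm:PP} is not proved in this paper at all: it is quoted verbatim from \cite{pilipenko2018perturbations}, so there is no internal argument to compare yours against. Your Step 1 is exactly the scaling observation the paper itself records in Remark \ref{rem:p_indep_of_sigma} (your exponents $\gamma=\alpha/(\alpha-1+\beta)$, $\rho=\gamma(1-\beta)$ check out, and weak uniqueness of the model equation via \cite{TanTsuWat74} justifies the identification in law), and it correctly reduces $\e$-independence of $\bar p_\pm$ and part 2 to statements about the single process $\widetilde X$. Steps 2--3 then take a genuinely different route from the cited source: \cite{pilipenko2018perturbations} treats general self-similar drivers (including fractional Brownian motion), so it cannot lean on the strong Markov property, whereas you exploit strong Markovianity of the stable-driven model equation together with a Lyapunov/far-field escape estimate. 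That trade-off is legitimate here -- you prove less general a statement by more standard Markovian tools -- and your sketch identifies the right mechanism: the escape probability from level $R$ fails only through a single large jump, with cost of order $R^{1-\beta-\alpha}\to0$ precisely because $\alpha-1+\beta>0$, and a Lyapunov function $g(x)\asymp|x|^{-p}$ works for $0<p<\alpha-1+\beta$. Your Step 3 asymptotics $\widetilde X(s)/\bar x^+(s)\to1$ on $\{\widetilde X\to+\infty\}$, plus the scale invariance $\e^\gamma\bar x^\pm(\e^{-\rho}t)=\bar x^\pm(t)$ and the near-zero bound, correctly yield a.s.\ convergence in $D([0,\infty),\bR)$ and hence the weak limit.

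Three points in Step 2 need tightening before this is a proof rather than a plan. First, the ``no trapping'' claim should not be argued by ``$Z^{(\alpha)}$ oscillates unboundedly'' (the bound $|Z(t)|\le K+C_Kt$ is not contradicted by $Z(t)=o(t)$); the clean argument is that $\nu^{(\alpha)}(\{|z|>3K\})>0$ gives infinitely many jumps of size $>3K$, each of which forces $|\widetilde X|>K$ at arbitrarily large times (the same large-jump device the present paper uses in Lemma \ref{lem:exit-time-is-small}). Second, a supermartingale built from a function of $|x|$ only controls $|\widetilde X|$; to get sign preservation you need the one-sided version (probability of ever dropping below $R/2$ from $x\ge R$ is small, after which $\widetilde X(t)\ge x+A_+(R/2)^\beta t+Z(t)\to+\infty$ since $Z(t)/t\to0$), and this one-sidedness is also what upgrades $|\widetilde X|\to\infty$ to $\widetilde X\to+\infty$ or $\widetilde X\to-\infty$, which is needed for $\bar p_++\bar p_-=1$ (c\`adl\`ag paths could otherwise flip sign by large jumps). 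Third, the final zero--one step is simpler than Borel--Cantelli: for each $\eta$ pick $R(\eta)$; on the non-escape event the process enters $\{|x|\ge R\}$ at arbitrarily large times, and the strong Markov property at such an entrance time bounds the non-escape probability by $\eta$, hence it is $0$. With these repairs the proposal is a correct, self-contained proof of Theorem \ref{thm:PP} in the L\'evy case.
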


\begin{rem}
If $\alpha=2$, i.e.\ if $Z^{(\alpha)}$ is a Brownian motion
then the probabilities $\bar p_\pm$ are known explicitly:
\ba
\bar p_-=\frac{A_+^{-\frac{1}{1+\beta}}}{A_-^{-\frac{1}{1+\beta}}+A_+^{-\frac{1}{1+\beta}}}\quad\text{ and }\quad
\bar p_+=\frac{A_-^{-\frac{1}{1+\beta}}}{A_-^{-\frac{1}{1+\beta}}+A_+^{-\frac{1}{1+\beta}}},
\ea
see \cite{BaficoB-82,delarue2014transition}.
\end{rem}

\begin{rem}
\label{rem:p_indep_of_sigma}
It follows from the self-similarity of $Z^{(\alpha)}$
that for any $\e,\delta,\gamma>0$ the rescaled process $\bar X^{\gamma,\delta,\ve}(t):=\gamma \bar X^\ve(\delta t)$, $t\geq 0$, satisfies the SDE
\ba
\bar X^{\gamma,\delta,\ve}(t)=\int_0^t \gamma^{1-\beta}\delta\; \bar a(\bar X^{\gamma,\delta,\ve}(s))\,\di s
+\e  \gamma  \delta^{\frac{1}{\alpha}}\cdot \bar Z^{(\alpha)}(t),
\ea
where $\bar Z^{(\alpha)}\stackrel{\di}{=} Z^{(\alpha)}$. 
This implies that the selection probabilities $\bar p_\pm$ defined in \eqref{eq:p+-}
are the same for any model equation
 \ba
 \label{e:model1}
 \bar X^\e(t)=\int_0^t \bar a(\bar X^\e(s))\,\di s+\e\cdot \sigma\cdot Z^{(\alpha)}(t)
 \ea
 with any $\sigma>0$. 
Moreover, they are completely determined by the four parameters $\alpha\in(1,2)$, $C_+/C_-\in[0,+\infty]$,
$\beta\in (0,1)$, and $A_+/A_-\in (0,\infty)$.
\end{rem}

In the present paper we solve the generalized selection problem for the SDE
 \eqref{eq:main_small}.  
 The main result of this paper is the following.

\begin{thm}
\label{thm:main}
Let assumptions \emph{\textbf{A}}$_{Z}$, \emph{\textbf{A}}$_{a}$, and \emph{\textbf{A}}$_{b}$
%, and \emph{\textbf{A}}$_{X}$ 
hold true, and let
$X^\e$ be a solution to \eqref{eq:main_small} with the initial condition $x=0$, namely
\ba
\label{e:X}
X^\e(t)=\int_0^t a(X^\e(s))\,\di s+\e\int_0^t b(X^\e(s-))\,\di Z(s),\quad t\geq 0.
\ea
Then 
\ba
\label{eq:main_conv}
\Law X^\e\Rightarrow \bar p_- \delta_{x^-}+\bar p_+\delta_{x^+},\quad \e\to 0,
\ea
in $D([0,\infty),\bR)$ where functions $x^\pm$ are defined in \eqref{e:eq_pm} and
the selection probabilities $\bar p_\pm$ are determined in Theorem \ref{thm:PP}
 for the model equation \eqref{e:model}.
\end{thm}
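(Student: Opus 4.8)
The strategy is to reduce the generalized equation \eqref{e:X} to the model equation \eqref{e:model} by a combination of a spatial rescaling and a careful comparison on a shrinking neighbourhood of the origin, and then to propagate the convergence away from the origin using the deterministic flow of the limit ODE. Concretely, I would proceed in four stages.

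\emph{Stage 1: Localisation and rescaling near the origin.}
Fix a small $\delta>0$ and let $\tau^\e_\delta=\inf\{t\ge 0:|X^\e(t)|=\delta\}$ be the exit time from $(-\delta,\delta)$. Up to time $\tau^\e_\delta$ only the behaviour of $a$ and $b$ on $[-\delta,\delta]$ matters. On this interval, by \textbf{A}$_a$ and \textbf{A}$_b$, $a(x)\approx A_\pm|x|^\beta$ (with slowly varying corrections $L_\pm$, $l$) and $b(x)\approx b(0)>0$. Using the self-similarity of the stable part of $Z$ and the fact — to be extracted from \textbf{A}$_Z$ — that on scale $\e^{\theta}$ the process $\e\,(Z_{t}-Z_0)$ looks, after the time/space rescaling dictated by Remark~\ref{rem:p_indep_of_sigma}, like $\sigma\,\e'\,Z^{(\alpha)}$, I would introduce the rescaled process $Y^\e(t):=\e^{-\kappa}X^\e(\e^{\rho}t)$ for the exponents $\kappa,\rho$ that make the drift and the noise of $Y^\e$ of the same order. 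The point of the scaling exponents is exactly the one behind \eqref{e:barx} and Remark~\ref{rem:p_indep_of_sigma}: the equation for $Y^\e$ converges, in the sense of the martingale problem, to the model equation \eqref{e:model} with $\bar a$ as in \eqref{e:bar_a} and some $\sigma>0$, \emph{and $\sigma$ does not affect the selection probabilities}. The slowly varying factors $l_\nu, l$ are harmless here because a slowly varying function evaluated along the scaling sequence tends to a constant ratio; this has to be made precise but is routine via Potter bounds.

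\emph{Stage 2: Identifying the limit as the model dynamics.}
Having set up $Y^\e$, I would show tightness of $\{Y^\e\}$ in $D([0,\infty),\bR)$ (using the linear growth in \textbf{A}$_a$, boundedness in \textbf{A}$_b$, and standard moment/Aldous criteria for jump SDEs) and identify every weak limit point as a solution of the model SDE \eqref{e:model1}. Since that equation has a unique strong Markov solution (by Theorem~3.1 of \cite{TanTsuWat74}, as noted after \eqref{eq:nu_alpha}), the limit is unique, and Theorem~\ref{thm:PP} applies to it. In particular, with probability one $|Y^\e|\to\infty$ in the rescaled time, and the sign of the escape is $+$ with probability $\bar p_+$ and $-$ with probability $\bar p_-$; equivalently, $X^\e$ exits $(-\delta,\delta)$ through $+\delta$ with probability $\to \bar p_+$ and through $-\delta$ with probability $\to \bar p_-$, and it does so in time $\to 0$ as $\e\to 0$ (then $\delta\to 0$). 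This step also needs the statement that the process does not oscillate back: that $X^\e$, once near $\pm\delta$, does not return to a neighbourhood of $0$ — this follows from the same model analysis (the model solution escapes to $\pm\infty$ monotonically in the limit) together with a large-deviation-type estimate that $\e b(X^\e)\,\di Z$ cannot push $X^\e$ back across $0$ before the deterministic drift carries it to $\delta$.

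\emph{Stage 3: From the exit point to the full trajectory.}
On the event $\{X^\e \text{ exits through } +\delta \text{ at time } \tau^\e_\delta\}$, for $t>\tau^\e_\delta$ the trajectory $X^\e$ is a small perturbation of the deterministic flow $X^0_{X^\e(\tau^\e_\delta)}$ of \eqref{e:ode}, which is Lipschitz away from $0$; by Gronwall and a maximal inequality for the stochastic integral, $\sup_{t\le T}|X^\e(t)-X^0_{\delta}(t-\tau^\e_\delta)|\to 0$ in probability, and since $\tau^\e_\delta\to 0$ and $\delta\to 0$ one gets convergence to $x^+(t)=A_+^{-1}(t)$ from \eqref{e:X0+}--\eqref{e:eq_pm} on compact time sets, uniformly; symmetrically for the $-\delta$ event. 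Combining with Stage~2 gives $\Law X^\e \Rightarrow \bar p_-\delta_{x^-}+\bar p_+\delta_{x^+}$ in $D([0,\infty),\bR)$, which is \eqref{eq:main_conv}.

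\emph{Main obstacle.}
The delicate point is Stage~1--2: showing that the \emph{perturbed} local dynamics (non-exact power drift with slowly varying $L_\pm$, multiplicative $b$, a general L\'evy $Z$ that is only \emph{tail-equivalent} to $\alpha$-stable, with its own slowly varying $l_\nu$) selects with \emph{exactly} the model probabilities $\bar p_\pm$, with no residual dependence on $b(0)$, on the slowly varying functions, or on the non-leading behaviour of $a$ and $\nu$. This is where Remark~\ref{rem:p_indep_of_sigma} is essential — it tells us the only things that can matter are $\alpha$, $C_+/C_-$, $\beta$, $A_+/A_-$ — but one still has to prove an invariance principle for the rescaled SDE that is robust enough to absorb all the perturbations simultaneously, and to control the large jumps of $Z$ (those outside the stable-like regime) so that they do not contribute in the limit. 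Handling the large jumps: a single large jump of $\e Z$ is of order $\e$, hence negligible on the $\delta$-scale once $\e\ll\delta$, but one must quantify this uniformly; I expect this to be the technically heaviest part of the argument.
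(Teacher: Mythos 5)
Your overall architecture (rescale near the origin to the model equation, invoke Theorem \ref{thm:PP} and Remark \ref{rem:p_indep_of_sigma} for the weights, then follow the deterministic flow away from the origin) is essentially the paper's, but there is a genuine gap at exactly the point you flag as delicate and then defer: the passage from the microscopic scale, where the model approximation holds, to a macroscopic neighbourhood of the origin. The rescaled analysis and Theorem \ref{thm:PP} only control the exit from a box of spatial size $R\e''$ on a time scale $T_0\e'$; they do not ``equivalently'' give exit from $(-\delta,\delta)$ with fixed $\delta$ through $\pm\delta$ with probabilities tending to $\bar p_\pm$ in vanishing time. In the intermediate regime $|x|\in[R\e'',\delta]$ the drift is of order $|x|^\beta$, hence tiny near $R\e''$, and since the noise is heavy-tailed there is no Gaussian-type large deviation bound of the kind you invoke to rule out a return across $0$ or a long sojourn near the origin (which would put limiting mass on the delayed solutions $X^0_\pm(\cdot;t_0)$, $t_0>0$). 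The paper closes this gap quantitatively: a Pruitt-type maximal estimate shows that $\sup_t\big|\int_0^t b_\e(Y^\e(s-))\,\di Z_\e(s)\big|$ grows at most like $t^{\frac1\alpha+\delta}$ uniformly in $\e$ (Lemma \ref{lem:sup_Levy}, Corollary \ref{cor:upper_limits_noise}), while the solution of the slowed-down equation $\dot\zeta=(1-\kappa)a_\e(\zeta)$ started above level $R$ grows like $t^{\frac{1}{1-\beta+\gamma}}$ (Lemma with bound \eqref{eq:1265}); since $\frac{1}{1-\beta}>\frac1\alpha$, a pathwise comparison (Lemmas \ref{lem:compar} and \ref{lem:972}) forces $X^\e$, once outside $[-R\e'',R\e'']$ with $R$ large, to stay above $(1-\mu)x^+((1-\kappa)\,\cdot\,)$ (resp.\ below its mirror image) with high probability. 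Without an estimate of this type your Stages 2--3 do not close.

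Two secondary points. First, pure power scalings $Y^\e(t)=\e^{-\kappa}X^\e(\e^{\rho}t)$ cannot in general balance drift and noise when $l$ and $l_\nu$ are nontrivial: the paper must take $\e',\e''$ as reciprocals of asymptotic inverses of the regularly varying functions $x^{1-\beta}/l(x)$ and $x^{\alpha}l_\nu(x)$ (Lemma \ref{l:eee}); Potter bounds control ratios along a correctly chosen scale but do not repair a wrong (purely power) choice. Second, in Stage 3 you assert that the flow is Lipschitz away from $0$, but \textbf{A}$_{a}$ only assumes $L_\pm$ continuous, so $a$ need not be locally Lipschitz and Gronwall is not available; the paper instead proves tightness of $\{X^\e\}$ with all limit points supported on solutions of the ODE (Lemma \ref{l:X}) and then excludes the delayed solutions by the lower bound above, which is the cleaner route under the stated assumptions.
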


Before proceeding with the proofs we give several clarifying remarks.

\begin{rem}
Theorem \ref{thm:main} states that the generalized selection probabilities of the equation \eqref{e:X} 
coincide with the selection probabilities $\bar p_\pm$ of the 
model equation \eqref{e:model}.
Hence the selection behaviour is robust 
with respect to appropriate perturbations of a) the drift, b) the L\'evy measure in the vicinity of the origin, and c)
with respect to incorporation of the multiplicative noise. Essentially, the selection probabilities for the whole class of SDEs 
\eqref{e:X} depend only on the four parameters of the model equation.

Our results agree with the results by \cite{BaficoB-82} for Gaussian diffusions ($\alpha=2$) where $\bar p_\pm$ were determined
in terms of certain integrals of $a(x)/b(x)$, see Eq.\ (3.4) and Theorem 4.1 in  \cite{BaficoB-82}.
\end{rem}

\begin{rem}
\label{rem:246}
If $x\neq 0$, then it is easy to verify that 
\ba
\Law X^\ve_x \Rightarrow \delta_{X^0_x}, \quad \ve\to 0,
\ea
with $X^0_x$ defined in \eqref{e:X0+} and \eqref{e:X0-}.
\end{rem}

\begin{rem}
We emphasize that although we do not assume uniqueness of 
(weak) solutions $X^\e$  of \eqref{e:X} for $\e\geq 0$, the weak limit  \eqref{eq:main_conv}
is unique.
\end{rem}

\begin{rem}
The question how to determine the selection probabilities $\bar p_\pm$ is still open. Although the results by \cite{pilipenko2018perturbations}
establish the existence of $\bar p_\pm$ for the model equation for self-similar noises it is clear that quite different methods should be used for
SDEs driven by L\'evy processes or, say, a by a fractional Brownian motion.  
\end{rem}

\begin{rem}
Eventually we note that Theorem \ref{thm:main} gives us the existence and uniqueness of the weak limit. There is a number of works 
in which pathwise restoration of uniqueness for ODEs with an irregular or even distributional drift $a$ by adding a random perturbaton is studied.
For example the regularization by adding a sample Brownian path was studied by \cite{davie2007uniqueness,davie2011,Flandoli-2011,flandoli2011regularizing,shaposhnikov2016some,AlabLeon-17,banos2018construction,banos2019restoration}.
The same problem for the fractional Brownian motion was treated by \cite{banos2015strong,catellier2016averaging,barrimi2017approximation,banos2017c,harangperkowski20,galeatigubinelli20}.
\end{rem}

\begin{rem}
The selection problem in a multidimensional setting was also tackled recently by \cite{pilipenko2018selection} and \cite{delarue2019zero}.
Small noise behaviour of multidimensional SDEs with discontinuous drift was also studied by \cite{buckdahn2009limiting} in the setting of
differential inclusions. 
\end{rem}

The rest of the paper is devoted to the proof of the main result. To make the arguments more transparent we preface the proof with 
a heuristic description of the steps and explain the structure of the paper.

First we consider the process $X^\e$ and note that due to the boundedness of $b$ and the sublinear growth and the continuity of the drift $a$, 
the family of distributions $\{\Law(X^\e)\}_{\e\in (0,1]}$ 
is tight
in $D([0,\infty),\bR)$ and any (weak) limit point is a solution of the ODE \eqref{e:ode} with $x=0$.
All possible solutions to  
\eqref{e:ode}  have been described in \eqref{eq:solutionODE}.

To prove Theorem \ref{thm:main} it suffices to show two properties of the limit laws of $X^\e$ as $\e\to 0$. 
First, a process $X^\e$ can spend only infinitesimal time near zero and hence
it chooses either the maximal or the  minimal solutions $x^\pm$ of the ODE \eqref{e:ode}; in other words, no solution 
$X_\pm^0(\cdot; t_0)$ with $t_0>0$ (see \eqref{eq:solutionODE}) can support the limiting law of $X^\e$. Second,
the deterministic solutions $x^\pm$ should be chosen with the probabilities $\bar p_\pm$ determined in \eqref{eq:p+-}.

Hence we will show that the selection takes place with probabilities $\bar p_\pm$ in an infinitesimal time-space 
box $t\in[0,T_0\e']$, $x\in[-R\e'',R\e'']$ with appropriately chosen bounds $\e'=\e'(\e)\to 0$ and $\e''=\e''(\e)\to 0$ and $T_0>0$, $R>0$ large enough.
To achieve this, we introduce a rescaled process $Y^\e(t):=X^\e(\e' t)/\e''$ and show that $Y^\e$ converges weakly to a solution
of the model equation \eqref{e:model1} with $\sigma=b(0)$. Hence the exit of $X^\e$ from the infinitesimal time-space 
box $[0,T_0\e']\times [-R\e'',R\e'']$ is equivalent to the exit of $Y^\e$ from the $\e$-independent time-space box $[0,T_0]\times [0,R]$ which is controlled 
by Theorem \ref{thm:PP}

The second step is to show that upon leaving the $\e$-dependent time-space box $[0,T_0\e']\times [-R\e'',R\e'']$ with $R>0$ sufficienty large, 
a solution $X^\e$ with high probability follows the maximal (minimal) solution $x^\pm$ as $\e\to 0$. 
Here it suffices to construct a deterministic increasing (decreasing) function that bounds $X^\e$ from below (above) with high probability.

The paper is organized as follows. In Section \ref{s:ts} we chose the appropriate scales $\e'$ and $\e''$ and show that
the rescaled process $Z_\e(t)=Z(\e' t)/\e''$ converges to the $\alpha$-stable process $Z^{(\alpha)}$ defined in \eqref{eq:char_stable}
and the rescaled process $Y^\e$ converges to the solution of the $\e$-independent model equation. 
In Section \ref{s:noise_estim} we obtain algebraic growth rates of 
the noise term  $\int_0^\cdot b(X^\ve(s-))\,\di Z(s)$ that are uniform over 
$\e\in(0,1]$ and the initial value $X^\e(0)$.
In Section \ref{s:exit} we study the exit of $X^\e$ from the time-space box $[0, T_0\e']\times [-R\e'',R\e'']$.
In Section \ref{s:exit2} we determine deterministic lower and upper bounds that push 
a solution $X^\e$ with an initial value $|X^\e(0)|\geq R\e''$ away from zero with high probability.
This will finish 
the proof of Theorem \ref{thm:main}. 

\medskip
\noindent
\textbf{Acknowledgements.} This research was partially supported by the Alexander von Humboldt Foundation within 
the Research Group Linkage Programme {\it Singular diffusions: analytic and stochastic approaches} between the University of Potsdam and the Institute of Mathematics of the National Academy of Sciences of Ukraine. A.P.\ thanks the Friedrich Schiller University of Jena for hospitality.

\section{Preliminary considerations and time-space rescaling\label{s:ts}}

Before starting the proof we make two technical assumptions that do not reduce the generality of the 
setting but simplify the arguments significantly.

\begin{rem}\label{rem:trunc_tails}
To establish convergence \eqref{eq:main_conv}  it suffices to show the weak
convergence on the space $D([0,T],\bR)$ for each $T>0$.
We will use the truncation of large jumps procedure.

For $M>0$, let  
\ba
Z^M(t)=Z(t)-\sum_{s\leq t} \Delta Z(s)\cdot \bI(|\Delta Z(s)|>M)
\ea
be the L\'evy process with bounded jumps.
For each $T>0$ and $\theta>0$ we can find $M>0$ large enough such that
\ba
\P\Big(Z(t)=Z(t)^M, t\in[0,T]\Big)=1-\exp\Big(-T\int_{|z|>M}\nu(\di z)\Big)\geq
1-\theta.
\ea
Then for any solution $X^\e$
there exists a solution $X^{\e,M}$ of \eqref{eq:main_small} driven by the
process $Z^M$
such that
\ba
\P\Big(X(t)^\e=X(t)^{\e,M},\, t\in[0,T]\Big)\geq 1-\theta
\ea
(we consider all processes on an appropriate probability space).
Hence in order to prove weak convergence of the processes $X^\e$ it is
sufficient to prove convergence for
the processes $X^{\e,M}$ under the additional assumption that for some
$M>0$
\ba
\label{eq:fin_tail}
\operatorname{supp} \nu\subseteq [-M,M]\quad \text{and}\quad \nu(\{\pm M\})=0.
\ea 
From now on we assume \eqref{eq:fin_tail} to hold for the process $Z$.
\end{rem}
\begin{rem}
\label{r:truncate}
Similarly to the previous remark we also note that for any two drifts $a$ and $\tilde a$ both satisfying \textbf{A}$_{a}$ and such that
$a(x)=\tilde a(x)$, $|x|\leq 1$, the corresponding solutions $X^\e$ and $\tilde X^\e$ coincide up to the exit from $[-1,1]$. 
Hence the selection probabilities
for these solutions in the limit $\e\to 0$ are equal too. From now on we assume without loss of generality that
\ba\label{eq:trunc_drift}
L_\pm(x)=L_\pm(x\wedge 1) \text{ for }   x>0
\ea 
to ensure the power growth of $a$ at infinity.
\end{rem}

\begin{lem}
\label{l:X}
Assume that assumptions  \emph{\textbf{A}}$_{a}$ and \emph{\textbf{A}}$_{b}$ 
are satisfied. Then the family of distributions $\{\Law(X^\e)\}_{\e\in(0,1]}$
is tight in $D([0,\infty),\bR)$ and a limit $X$ of any
weakly convergent subsequence $\{X^{\ve_n}\}_{n\geq 1}$, $X^{\ve_n}\Rightarrow X$, $n\to\infty$,
satisfies the integral equation
\ba
X(t)= \int_0^t a(X(s))\,\di s.
\ea
\end{lem}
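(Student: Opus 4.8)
The plan is to establish tightness via a standard Aldous-type criterion for semimartingales in $D([0,\infty),\bR)$, and then identify every subsequential limit as a solution of the integral equation $X(t)=\int_0^t a(X(s))\,\di s$. Write $X^\e(t)=\int_0^t a(X^\e(s))\,\di s+\e N^\e(t)$ with $N^\e(t):=\int_0^t b(X^\e(s-))\,\di Z(s)$. Using assumption \textbf{A}$_b$ the integrand $b$ is bounded, say $|b|\le K$, and after the truncation of large jumps in Remark \ref{rem:trunc_tails} we may take $\nu$ compactly supported, so $Z$ has moments of all orders and $N^\e$ is a square-integrable martingale with predictable quadratic variation $\langle N^\e\rangle_t=\int_0^t b(X^\e(s-))^2\,\di s\cdot\int z^2\,\nu(\di z)\le K^2 t\int z^2\,\nu(\di z)$, uniformly in $\e$. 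For the drift term, sublinear (linear) growth of $a$ plus a Gronwall argument on $\E\sup_{s\le t}|X^\e(s)|^2$ (closing the loop with the martingale bound on $\E\sup_{s\le t}|N^\e(s)|^2$ via Doob) gives $\sup_{\e\in(0,1]}\E\sup_{s\le T}|X^\e(s)|^2<\infty$ for every $T$. This yields compact containment. For the modulus-of-continuity / jump-size control needed for tightness in $D$, I would bound, for stopping times $\tau\le\sigma\le\tau+\delta$ bounded by $T$,
\ba
\E\big|X^\e(\sigma)-X^\e(\tau)\big|\le \E\int_\tau^\sigma|a(X^\e(s))|\,\di s+\e\,\E|N^\e(\sigma)-N^\e(\tau)|\le C_T\delta+\e\,C_T\sqrt\delta,
\ea
using the a priori moment bound for the drift part and the martingale $L^2$ estimate plus optional stopping for the noise part. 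Aldous' criterion then gives tightness of $\{\Law(X^\e)\}_{\e\in(0,1]}$ in $D([0,\infty),\bR)$.

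For the identification of limits, suppose $X^{\e_n}\Rightarrow X$ in $D([0,\infty),\bR)$. By Skorokhod's representation theorem realize the convergence almost surely on a common probability space; since the limit $X$ has continuous paths (it will, a posteriori, but we can also argue directly), convergence in $D$ at the limit point implies local uniform convergence $X^{\e_n}\to X$ a.s. The noise term vanishes: $\e_n N^{\e_n}\to 0$ uniformly on compacts in probability, because $\E\sup_{s\le T}|N^{\e_n}(s)|^2$ is bounded uniformly in $n$ (Doob plus the quadratic-variation bound above) so $\e_n\sup_{s\le T}|N^{\e_n}(s)|\to 0$ in $L^2$, hence in probability. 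For the drift term, continuity of $a$ gives $a(X^{\e_n}(s))\to a(X(s))$ for a.e.\ $s$ (all $s$ where $X$ is continuous), and the uniform moment bound together with the sublinear growth of $a$ provides the uniform integrability needed to pass $\int_0^t a(X^{\e_n}(s))\,\di s\to\int_0^t a(X(s))\,\di s$ by dominated convergence. Passing to the limit in $X^{\e_n}(t)=\int_0^t a(X^{\e_n}(s))\,\di s+\e_n N^{\e_n}(t)$ yields $X(t)=\int_0^t a(X(s))\,\di s$ for all $t$, which is the claim; note $X(0)=0$ since each $X^\e(0)=0$.

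The main obstacle is the interplay between the a priori moment estimate and the tightness modulus: one must close the Gronwall loop for $\E\sup_{s\le T}|X^\e(s)|^2$ using only linear growth of $a$ (not Lipschitz continuity) and the martingale bound for $N^\e$, and then feed this back into the Aldous increment estimate. A technical subtlety is that $a$ has been modified near infinity (Remark \ref{r:truncate}) only to have power growth, so linear-growth-type bounds still apply; and the noise truncation of Remark \ref{rem:trunc_tails} is what makes $N^\e$ genuinely $L^2$ with a finite $\int z^2\,\nu(\di z)$. Everything else — continuity of $a$, boundedness of $b$, Skorokhod embedding, dominated convergence — is routine.
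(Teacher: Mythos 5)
Your proposal is correct and follows essentially the same route as the paper's (much more condensed) proof: the noise term vanishes because $b$ is bounded, tightness comes from Aldous' criterion together with the uniform bound $\sup_{\e\in(0,1]}\E\sup_{t\le T}|X^\e(t)|^2<\infty$, and the limit is identified through the continuity of $a$ (the paper passes to the limit via joint convergence of $\bigl(X^{\e_n},\int_0^\cdot a(X^{\e_n}(s))\,\di s\bigr)$ in $D\times C$, you via Skorokhod representation -- equivalent devices). The only slight imprecision is calling $N^\e$ a martingale: even after the truncation of Remark \ref{rem:trunc_tails} the process $Z$ generally carries a nonzero drift $\mu$, so $N^\e$ is a semimartingale and you should split off the bounded drift part before invoking Doob and optional stopping, a cosmetic fix that changes none of your estimates.
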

\begin{proof}
First we note that since $b$ is bounded,
\ba
\label{e:bZ}
\e\int_0^t b(X^\e(s-))\,\di Z(s)\Rightarrow 0,\quad \e\to 0,
\ea
weakly in the uniform topology. 
Tightness of $\{\Law(X^\e)\}_{\e\in(0,1]}$ follows, e.g.\ from the continuity of $a$, Aldous' criterion and 
boundedness
\ba
\sup_{\e\in(0,1]} \E \sup_{t\in[0,T]}|X_t^\e|^2\leq C(T)<\infty
\ea
for each $T>0$ and some $C(T)>0$.

Finally, due to the continuity of $a$, 
for any weakly convergent subsequence $X^{\ve_n}\Rightarrow X$ we get the weak convergence of the pairs
\ba
\Big(X^{\e_n}(\cdot),  \int_0^\cdot a(X^{\e_n}(s))\,\di s \Big)\Rightarrow  
\Big(X(\cdot),  \int_0^\cdot a(X(s))\,\di s \Big), \ n\to\infty,
\ea
in $D([0,\infty),\bR)\times C([0,\infty),\bR)$
which together with \eqref{e:bZ} implies the result.
\end{proof}

Let $X^\e$ be any solution of \eqref{e:X}. For any $\e'=\e'(\e)>0$ and $\e''=\e''(\e)>0$ consider a time-space rescaled process
\ba
\label{eq:Y_rescaling}
Y^\e(t)=\frac{X^\e(\e' t)}{\e''},\quad t\geq 0,
\ea
which satisfies the SDE
\ba
\label{e:Ye}
Y^\e(t)=\frac{X^\e(\e' t)}{\e''}
&=\frac{1}{\e''}\int_0^{\e' t} a(X^\e(s))\,\di s+  \frac{\e}{\e''}
\int_0^{\e' t} b(X^\e {(s-)})\,\di Z(s)\\
&=\int_0^t \frac{a(\e'' Y^\e(s))}{\e''/\e'}\,\di s+
 \int_0^t b(\e'' Y^\e(s-))\,\di \frac{Z(\e' s)}{\e''/\e}\\
&=\int_0^t  {a_\e( Y^\e(s))}\,\di s+ \int_0^t b_\e( Y^\e(s-))\,\di
{Z_\e( s)},
\ea
where
\ba\label{eq:a_e,b_e}
a_\e(y)=\frac{a(\e'' y)}{\e''/\e'}, \quad  b_\e(y)=  b(\e'' y),\quad
Z_\e(t)= \frac{Z(\e' t)}{\e''/\e}.
\ea

\begin{lem}
\label{l:eee}
There exist positive null sequences $\e'=\e'(\e)$ and  $\e''=\e''(\e)$ 
such that 
\ba
\label{e:eee0} 
\lim_{\e\to 0}\frac{\e''}{\e} = 0
\ea
and
\begin{align}
\label{e:eee1}
&\frac{\e''}{\e'}\sim  (\e'')^\beta l\Big(\frac{1}{\e''}\Big),\\
\label{e:eee2}
&\e'\sim  \Big(\frac{\e''}{\e}\Big)^{\alpha} \cdot \Big( l_\nu\Big(\frac{\e}{\e''}\Big)\Big)^{-1} \quad  \text{as} \quad \e\to 0.
\end{align}
\end{lem}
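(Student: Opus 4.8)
\emph{Proof idea.} It is convenient to build the two scales with $\e''$, rather than $\e$, in the role of the free parameter, recovering the dependence on $\e$ only at the end. The point is that fixing $\e''=\delta$ \emph{decouples} \eqref{e:eee1} from \eqref{e:eee2}: given $\delta>0$, relation \eqref{e:eee1} forces the explicit choice $\e'=g(\delta):=\delta^{1-\beta}/l(1/\delta)$, and since $\beta\in(0,1)$ and $l$ is slowly varying at infinity, $g$ is regularly varying at $0+$ with index $1-\beta>0$; in particular $g(\delta)\to 0$ as $\delta\to 0$.

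It then remains to choose $\e=\e(\delta)$ so that \eqref{e:eee2} holds. Writing $w:=\e''/\e=\delta/\e$, the right-hand side of \eqref{e:eee2} equals $H(w):=w^\alpha/l_\nu(1/w)$, which is now a function of $w$ alone. Since $l_\nu$ is slowly varying at infinity, $H$ is regularly varying at $0+$ with positive index $\alpha$, hence by the Karamata theory of regular variation it admits an asymptotic inverse $H^{-1}$, itself regularly varying at $0+$ with index $1/\alpha$ and satisfying $H(H^{-1}(u))\sim u$ as $u\to 0+$. Define $w(\delta):=H^{-1}(g(\delta))$ and $\e(\delta):=\delta/w(\delta)$. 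Then $\e''/\e=w(\delta)$ identically, so $(\e''/\e)^{\alpha}/l_\nu(\e/\e'')=H(w(\delta))\sim g(\delta)=\e'$, i.e.\ \eqref{e:eee2} holds; moreover $w=H^{-1}\circ g$ is regularly varying at $0+$ with index $(1-\beta)/\alpha>0$, so $w(\delta)\to 0$, which after substitution is exactly \eqref{e:eee0}.

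Finally one inverts $\delta\mapsto\e(\delta)$. Collecting indices, $\e(\delta)=\delta/w(\delta)$ is regularly varying at $0+$ with index $1-(1-\beta)/\alpha=(\alpha+\beta-1)/\alpha$, which is strictly positive because $\alpha\in(1,2)$ and $\beta\in(0,1)$; thus $\e(\cdot)$ is continuous on a right neighbourhood of the origin and tends to $0$ there, so by the intermediate value theorem every sufficiently small $\e>0$ equals $\e(\delta)$ for some $\delta=\delta(\e)$ with $\delta(\e)\to 0$. Setting $\e''(\e):=\delta(\e)$ and $\e'(\e):=g(\delta(\e))$ then yields two positive null sequences for which \eqref{e:eee1} holds by the very definition of $g$, \eqref{e:eee2} holds by the previous paragraph, and \eqref{e:eee0} follows from $\e''/\e=w(\delta(\e))\to 0$. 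I expect the only genuinely technical points to be the existence of the asymptotic inverse of $H$ and the bookkeeping showing that the compositions $H^{-1}\circ g$ and $g\circ\delta(\cdot)$ keep the stated indices of regular variation — both routine once the Karamata machinery is in place — so that the single real idea of the argument is the reparametrization by $\e''$ which uncouples the two asymptotic constraints.
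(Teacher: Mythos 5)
Your construction is correct in substance and lives in the same Karamata circle of ideas as the paper's proof, but the bookkeeping is organized differently. The paper works at infinity: it forms the regularly varying functions $f_1$ and $f_2(x)=x^{\alpha}l_\nu(x)$, takes their asymptotic inverses $g_1,g_2$ via \cite[Theorem 1.5.12]{BinghamGT-87}, combines them into a third regularly varying function $f_3$ with asymptotic inverse $g_3$, and then defines $\e'(\e):=1/g_3(1/\e)$ and $\e''(\e):=1/g_1(1/\e')$ directly as functions of $\e$; the three relations then hold asymptotically by construction and no exact inversion is ever required. You instead parametrize by $\delta=\e''$, solve \eqref{e:eee1} exactly by $\e'=g(\delta)=\delta^{1-\beta}/l(1/\delta)$, use a single asymptotic inverse $H^{-1}$ to enforce \eqref{e:eee2}, and only at the end recover $\e''$ as a function of $\e$ by exactly inverting $\delta\mapsto\e(\delta)$ with the intermediate value theorem. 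The decoupling of the two constraints is a transparent way to see why the scales exist, and it even yields \eqref{e:eee1} with equality rather than merely asymptotically.

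The one step you should not wave through is the asserted continuity of $\delta\mapsto\e(\delta)=\delta/H^{-1}(g(\delta))$. Under the standing assumptions the slowly varying functions $l$ and $l_\nu$ are only measurable, and an asymptotic inverse is determined only up to asymptotic equivalence, so neither $g$ nor $H^{-1}$ is automatically continuous and the intermediate value theorem does not apply as written. The gap is real but routine to close: since every relation you need is an asymptotic equivalence, you may replace $l$, $l_\nu$ (hence $g$, $H$) and $H^{-1}$ by continuous, indeed smooth and eventually monotone, asymptotically equivalent versions (Smooth Variation Theorem, \cite[Theorem 1.8.2]{BinghamGT-87}, or a Karamata representation with continuous ingredients); then $\e(\cdot)$ is continuous, positive, and tends to $0$ at $0+$, the IVT applies on a fixed interval $(0,\delta_0]$, and any choice of preimage $\delta(\e)$ tends to $0$ because $\inf_{\delta\in[c,\delta_0]}\e(\delta)>0$ for each $c>0$. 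Alternatively, you can avoid exact inversion altogether, as the paper does, by taking an asymptotic inverse of the regularly varying function $\delta\mapsto\e(\delta)$ (index $(\alpha-1+\beta)/\alpha>0$), which produces $\e''(\e)$ satisfying all three relations asymptotically. With either patch your argument is complete.
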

\begin{proof} 
Recall that a product, a sum, and a ratio of two positive slowly varying functions is again a slowly 
varying function (Proposition 1.3.6 in \cite{BinghamGT-87}). 
Furthermore
due to Theorem 1.5.12 from \cite{BinghamGT-87}, each regularly varying function $f$ with  index $\gamma>0$
has an asymptotic inverse function $g$ that is regularly varying with index $1/\gamma$, namely
\ba\label{eq:gen_inv_RV}
f(g(x))\sim g(f(x)) \sim x,\ \ 	x\to\infty.
\ea
Consider functions $f_1(x)=x^{1-\beta}/l(x)$
and $f_2(x)=x^{\alpha} l_\nu(x)$, $x>0$, that are regularly varying at infinity and let 
\ba
\label{e:g1g2}
g_1(x)=x^{\frac{1}{1-\beta}}l_1(x)\quad \text{and}\quad g_2(x)=x^{\frac{1}{\alpha}}l_2(x)
\ea
be their asymptotic inverses,
where $l_1$ and $l_2$ are slowly varying at infinity functions. Since $\frac1{1-\beta}-\frac1{\alpha}>0$, the function
\ba
f_3(x)=x^{\frac1{1-\beta}-\frac1{\alpha}}\frac{l_1(x)}{l_2(x)}
\ea
is also regularly varying with positive index. Let
\ba
g_3(x)=x^{(\frac1{1-\beta}-\frac1{\alpha})^{-1}}l_3(x)
\ea
be its asymptotic inverse.

We set
\ba
&\e'(\ve):= \ve^{({\frac1{1-\beta}-\frac1{\alpha}})^{-1}} l_3\Big(\frac{1}{\ve}\Big)^{-1}= \frac{1}{g_3(\frac{1}{\e})},\\
&\e''(\ve):=  (\ve'(\ve))^{\frac1{1-\beta}} l_1\Big(\frac{1}{\e'(\e)}\Big)^{-1}= \frac{1}{g_1(\frac{1}{\e'})}.
\ea
It is easy to see that $\e\mapsto \e'(\e)$ and $\e\mapsto\e''(\ve)$ satisfy conditions of the Lemma. A straightforward verification yields the 
equivalence \eqref{e:eee1}:
\ba
f_1\Big(\frac{1}{\e''}\Big)&= f_1 \Big(g_1\Big(\frac{1}{\e'}\Big)\Big)
\sim \frac{1}{\e'}.
\ea
Furthermore, since $\frac{1}{\e}\sim f_3(\frac{1}{\e'})$ we get
\ba
\label{eq:727}
\frac{\e}{\e''}\sim \frac{g_1(\frac{1}{\e'})}{f_3(\frac{1}{\e'})}=g_2\Big(\frac{1}{\e'}\Big).
\ea
Since any regularly varying function preserves equivalence, see \cite[Theorem 3.42]{buldygin2018pseudo},
we obtain \eqref{e:eee2} by application of $f_2$ to \eqref{eq:727}:
\ba
f_2\Big(\frac{\e}{\e''}\Big)\sim f_2\Big(g_2\Big(\frac{1}{\e'}\Big)\Big)\sim \frac{1}{\e'}.
\ea
\end{proof}

Let $\nu$ be the L\'evy measure of the process $Z$ satisfying \textbf{A}$_{Z}$ and \eqref{eq:fin_tail}, and let $\e'$, $\e''$ be the 
sequences chosen in Lemma \ref{l:eee}. For $\e\in(0,1]$ let us define rescaled jump measures $\nu_\e$ by setting
\ba
\label{e:nueps}
\nu_\e([ z,\infty ))& =\e' \nu\Big(\Big[ \frac{\e''z}{\e},\infty \Big)\Big),\\
\nu_\e((-\infty,-z])&=
\e' \nu\Big(\Big(-\infty, -\frac{\e'' z}{\e}\Big]\Big),\quad z>0.
\ea

\begin{lem}
\label{l:lm}
For the family of jump measures $\{\nu_\e\}_{\e\in(0,1]}$ defined in \eqref{e:nueps} we have: \\
1. for each $z>0$
\ba
\label{e:nuepsconv}
\lim_{\e\to 0} \nu_\e([ z,\infty ))&=\nu^{(\alpha)}([z,\infty)),\\
\lim_{\e\to 0} \nu_\e((-\infty,-z])&=\nu^{(\alpha)}((-\infty,z]),
\ea 
where $\nu^{(\alpha)}$ is defined in \eqref{eq:nu_alpha}.

\noindent
2. for each $\delta>0$ there is $C>0$ such that for all $z>0$ 
\ba
\label{eq:Levy_lim1}
\sup_{\e\in(0,1]}\Big(\nu_\e ((-\infty,-z])+  \nu_\e ([z,\infty)) \Big)\leq C \Big(\frac{1}{z^{\alpha-\delta}} \vee \frac{1}{z^{\alpha+\delta}}\Big).
\ea
\end{lem}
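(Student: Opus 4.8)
The plan is to establish the two assertions separately, the only ingredients being regular variation (as in \cite{BinghamGT-87}), Lemma~\ref{l:eee}, and the truncation~\eqref{eq:fin_tail}. Throughout I abbreviate $u_\e:=\e''/\e$, so that $u_\e\to 0$ as $\e\to 0$ by~\eqref{e:eee0}, and by~\eqref{e:nueps}
\ba
\nu_\e([z,\infty))=\e'\,\nu([u_\e z,\infty)),\qquad \nu_\e((-\infty,-z])=\e'\,\nu((-\infty,-u_\e z]),\qquad z>0.
\ea

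For part 1 I would fix $z>0$ and simply substitute the asymptotics at hand. Since $u_\e z\to 0$, assumption~\textbf{A}$_{Z}$ gives $\nu([u_\e z,\infty))=C_+(u_\e z)^{-\alpha}l_\nu\big(1/(u_\e z)\big)(1+o(1))$, while~\eqref{e:eee2} gives $\e'=u_\e^{\alpha}\,l_\nu(1/u_\e)^{-1}(1+o(1))$; multiplying, the powers of $u_\e$ cancel and
\ba
\nu_\e([z,\infty))=C_+\,z^{-\alpha}\,\frac{l_\nu\big(1/(u_\e z)\big)}{l_\nu(1/u_\e)}\,(1+o(1)),\qquad \e\to 0.
\ea
Because $1/u_\e\to\infty$ and $l_\nu$ is slowly varying, the ratio tends to $1$, which yields $\nu_\e([z,\infty))\to C_+z^{-\alpha}=\nu^{(\alpha)}([z,\infty))$ (if $C_+=0$ the same computation with the $o(\cdot)$ term gives the limit $0$); the left tail is treated the same way with $C_-$, which proves~\eqref{e:nuepsconv}.

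For part 2 I may assume $0<\delta<\alpha$, since enlarging $\delta$ only weakens~\eqref{eq:Levy_lim1}. Set $V(w):=\nu(\{|y|\ge w\})$, $w>0$; by~\textbf{A}$_{Z}$ the function $x\mapsto V(1/x)$ is regularly varying at $+\infty$ with index $\alpha$, so Potter's bound (\cite[Theorem~1.5.6]{BinghamGT-87}) furnishes constants $A\ge 1$ and $w_0\in(0,M)$ with $V(vz)/V(v)\le A\,(z^{-(\alpha+\delta)}\vee z^{-(\alpha-\delta)})$ whenever $v\le w_0$ and $vz\le w_0$. On the set $\{\e:u_\e\le w_0\}$ and for $z\le w_0/u_\e$, taking $v=u_\e$ gives $\nu_\e(\{|y|\ge z\})=\e'V(u_\e z)\le A\,\nu_\e(\{|y|\ge 1\})\,(z^{-(\alpha+\delta)}\vee z^{-(\alpha-\delta)})$, and $\sup_\e\nu_\e(\{|y|\ge 1\})<\infty$ by part 1 together with local boundedness of $\e'$. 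For $z>w_0/u_\e$, monotonicity of $V$ gives $\nu_\e(\{|y|\ge z\})\le\e'V(w_0)$, which vanishes once $z>M/u_\e$ by~\eqref{eq:fin_tail}; in the range $w_0/u_\e<z\le M/u_\e$ I estimate $\e'V(w_0)\le C u_\e^{\alpha-\delta}\le C M^{\alpha-\delta} z^{-(\alpha-\delta)}$, using $z\le M/u_\e$ and $\e'=o(u_\e^{\alpha-\delta})$ as $\e\to 0$ (which follows from~\eqref{e:eee2} and slow variation). Finally, on the complementary set $\{\e:u_\e>w_0\}$ --- which is bounded away from $\e=0$ since $u_\e\to 0$, so that there $\e'$ and $u_\e$ stay in a compact subset of $(0,\infty)$ --- I use $V(w)\le w^{-(\alpha+\delta)}\int|y|^{\alpha+\delta}\,\nu(\di y)$ (the integral finite because $\alpha+\delta>\alpha$ and $\nu$ has bounded support) to conclude $\nu_\e(\{|y|\ge z\})\le C z^{-(\alpha+\delta)}$. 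Collecting the cases gives~\eqref{eq:Levy_lim1}.

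The step that needs care is the uniformity in part 2: Potter's inequality is available only once the reference point $u_\e$ has entered the small-argument regime, so the far tail $z\ge w_0/u_\e$, the matching to the truncation cut-off $M/u_\e$, and the residual range of $\e$ bounded away from $0$ must each be controlled by hand, keeping track of the interplay between the normalisations~\eqref{e:eee1}--\eqref{e:eee2} and the regular variation of $\nu$. None of this is deep, but it is where the bookkeeping is delicate.
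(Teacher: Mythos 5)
Your proof is correct and, in substance, follows the paper's route: part 1 is the identical computation, combining \eqref{eq:coeff_assumptions} with \eqref{e:eee2} and the slow variation of $l_\nu$, and part 2 rests, as in the paper, on Potter's theorem together with the truncation \eqref{eq:fin_tail}, which makes the rescaled tails vanish once $\e''z/\e>M$. The only real difference is in the bookkeeping of part 2: the paper treats the whole range $\e''z/\e\le M$ in one stroke, bounding $\sup_{y\in(0,M]}\nu([y,\infty))\big/\big(C_+y^{-\alpha}l_\nu(1/y)\big)$ and applying Potter's bound to the slowly varying factor $l_\nu$, whereas you apply Potter directly to the two-sided tail $V(w)=\nu(\{|y|\ge w\})$, which is regularly varying of index $-\alpha$ at $0$; this forces you to handle separately the middle regime where $\e''z/\e$ lies between your Potter threshold $w_0$ and $M$ (via monotonicity of $V$ and $\e'=o((\e''/\e)^{\alpha-\delta})$) and the residual range of $\e$ bounded away from $0$ (via the moment bound with $\int|y|^{\alpha+\delta}\nu(\di y)<\infty$). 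What your variant buys is that it never divides by $C_+$ or $C_-$, so the degenerate cases $C_+=0$ or $C_-=0$ allowed by \textbf{A}$_Z$ are covered without comment, at the price of the extra case analysis; both versions rely on the same implicit local boundedness (away from $0$ and $\infty$) of $\e'$ and $\e''/\e$ on compact subsets of $(0,1]$, which the paper also uses tacitly when taking suprema over $\e\in(0,1]$.
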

\begin{proof}
Without loss of generality we consider only the right tail of $\nu_\e$.   

\noindent
1. For any $z>0$ we apply \eqref{eq:coeff_assumptions}, \eqref{e:eee0} and \eqref{e:eee2} to get for $\e\to 0$ that
\ba 
\label{eq:Levy_measure_transform}
\nu_\e([ z,\infty ))& =\e' \nu\Big(\Big[ \frac{\e''z}{\e},\infty \Big)\Big)
\sim C_+\e'\Big(\frac{\e''z}{\e}\Big)^{-\alpha}l_\nu\Big(\frac{\e}{\e'' z}\Big)
\sim \frac{C_+}{z^\alpha}\cdot\frac{l_\nu\big(\frac{\e}{\e'' z}\big)}{l_\nu\big(\frac{\e}{\e''}\big)} 
\sim \frac{C_+}{z^\alpha}=\nu^{(\alpha)}([z,\infty)).
\ea
2. Let $\delta>0$, $\e\in (0,1]$, $z>0$. We consider two cases. First, let $0<\frac{\e''z}{\e}\leq M$. Then we take into account \eqref{e:eee2} and \eqref{eq:coeff_assumptions} and 
apply Potter's theorem, see e.g.\ \cite[Theorem 1.5.6]{BinghamGT-87} to get
\ba
 \nu_\e ([z,\infty))=\e' \nu\Big(\Big[ \frac{\e''z}{\e},\infty \Big)\Big)
&=\frac{\e'}{(\frac{\e''}{\e})^{\alpha}l_\nu(\frac{\e}{\e''} )^{-1} } \cdot\frac{\nu([ \frac{\e''z}{\e},\infty  ) )}{C_+ (\frac{\e''z}{\e})^{-\alpha}l_\nu(\frac{\e}{\e''z} )  }
\cdot \frac{C_+ (\frac{\e''z}{\e})^{-\alpha}l_\nu(\frac{\e}{\e''z} )  }{  (\frac{\e''}{\e})^{-\alpha}l_\nu(\frac{\e}{\e''} )  }\\
&\leq \sup_{\e\in(0,1]}\frac{\e'}{(\frac{\e''}{\e})^{\alpha}l_\nu(\frac{\e}{\e''} )^{-1} } \cdot
\sup_{y\in(0,M]}\frac{\nu([ y,\infty  ) )}{C_+ y^{-\alpha}l_\nu(\frac{1}{y} )  }
\cdot \frac{C_+}{z^\alpha}\cdot (z^{-\delta}\vee z^\delta)\\
&=\frac{C(\delta,M)}{z^{\alpha}}\cdot (z^{-\delta}\vee z^\delta).
\ea
Second, for $\frac{\e''z}{\e} > M$ by \eqref{eq:fin_tail} we have
\ba
\nu_\e([z,\infty))=0.
\ea
\end{proof}

\begin{thm}
\label{t:ZY}
Suppose that $\e'$ and $\e''$ satisfy \eqref{e:eee1} and \eqref{e:eee2}, and assumptions of Theorem \ref{thm:main} hold true. 
Then  \\
1.
\ba
\label{e:ZZ}
Z_\e \Rightarrow Z^{(\alpha)},\quad \e\to 0,
\ea
where $Z^{(\alpha)}$ is defined in \eqref{eq:char_stable};\\
2.  
there exists a weak limit 
\ba
\label{eq:conv_Y}
Y^\e\Rightarrow Y,\quad \e\to 0,
\ea
which satisfies the SDE
\ba
\label{e:eqY}
Y(t)=\int_0^t \bar a(Y(s)) \,\di s+ b(0)Z^{(\alpha)}(t),\quad t\geq 0.
\ea
The process $Y$ diverges to $\pm\infty$ with the selection probabilities $\bar p_\pm$ defined in Theorem \ref{thm:PP}.
\end{thm}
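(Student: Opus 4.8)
We outline the argument, treating the two assertions in turn.

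\smallskip

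\noindent\emph{Convergence $Z_\e\Rightarrow Z^{(\alpha)}$.} The plan is to pass to the limit in characteristic exponents. Because of the truncation \eqref{eq:fin_tail} the measure $\nu$ is compactly supported, so $Z$ has a finite first moment and $\psi(u):=\ln\E\ex^{\i u Z(1)}=\i\mu_0 u+\int(\ex^{\i u z}-1-\i u z)\,\nu(\di z)$ for some $\mu_0\in\bR$. From the scaling \eqref{eq:a_e,b_e} one has $\ln\E\ex^{\i\lambda Z_\e(1)}=\e'\psi(\lambda\e/\e'')$, and the change of variables $w=\e z/\e''$ turns this into $\i\mu_0\lambda\,\tfrac{\e\e'}{\e''}+\int(\ex^{\i\lambda w}-1-\i\lambda w)\,\nu_\e(\di w)$ with $\nu_\e$ from \eqref{e:nueps}. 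By Lemma~\ref{l:lm} the measures $\nu_\e$ converge vaguely on $\bR\setminus\{0\}$ to $\nu^{(\alpha)}$, while the uniform estimate \eqref{eq:Levy_lim1} dominates the integrand $\ex^{\i\lambda w}-1-\i\lambda w=O(w^2)$ near the origin (integrable against $\nu_\e$ uniformly in $\e$ since $\alpha\in(1,2)$) and $=O(|w|)$ at infinity (integrable against $\nu_\e$ uniformly in $\e$ since $\alpha>1$); hence the integral converges to the exponent of $Z^{(\alpha)}(1)$ in \eqref{eq:char_stable}. The remaining term $\i\mu_0\lambda\,\tfrac{\e\e'}{\e''}$ tends to $0$: inserting the explicit formulas for $\e'(\e)$ and $\e''(\e)$ from the proof of Lemma~\ref{l:eee} shows that $\e\mapsto\e\e'/\e''$ is regularly varying at $0$ with the strictly positive index $(1-\tfrac1\alpha)/(\tfrac1{1-\beta}-\tfrac1\alpha)$. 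Thus $Z_\e(1)\Rightarrow Z^{(\alpha)}(1)$; since all the processes involved are L\'evy this upgrades to weak convergence in $D([0,\infty),\bR)$ by the classical functional limit theorem for processes with independent increments (see e.g.\ \cite{GihSko-82}).

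\smallskip

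\noindent\emph{Convergence $Y^\e\Rightarrow Y$.} Here I would use stability of SDE solutions under joint convergence of the coefficients and the driving noise. First, $a_\e\to\bar a$ and $b_\e\to b(0)$ locally uniformly on $\bR$: for $y>0$, $a_\e(y)=(\e'')^\beta y^\beta L_+(\e'' y)/(\e''/\e')$, and \eqref{e:eee1} together with \eqref{eq:L_pm}, the uniform convergence theorem and Potter's bounds for slowly varying functions (\cite[Theorem~1.5.6]{BinghamGT-87}) give $a_\e(y)\to A_+ y^\beta=\bar a(y)$ (similarly for $y<0$, and $a_\e(0)=0$), while $b_\e(y)=b(\e'' y)\to b(0)$ by continuity of $b$. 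Potter's theorem also yields $\sup_{\e}\sup_{|y|\le R}|a_\e(y)|<\infty$ for each $R$ and a uniform sublinear bound $|a_\e(y)|\le C(1+|y|)$, which, with $b$ bounded and $\{Z_\e\}$ tight with uniformly controlled jumps (by \eqref{eq:Levy_lim1}), give tightness of $\{Y^\e\}$ in $D([0,T],\bR)$ and the uniform estimates $\sup_{\e}\E\sup_{s\le T}|Y^\e(s)|^2<\infty$ for every $T>0$. Along any subsequence, $(Y^\e,Z_\e)\Rightarrow(Y,W)$ in $D([0,\infty),\bR^2)$ with $W\stackrel{\di}{=}Z^{(\alpha)}$ (the two coordinates jump simultaneously, since the drift part of $Y^\e$ is continuous). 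Passing to the limit in \eqref{e:Ye}: the drift integral $\int_0^\cdot a_\e(Y^\e(s))\,\di s\Rightarrow\int_0^\cdot\bar a(Y(s))\,\di s$ because $a_\e\to\bar a$ locally uniformly with $\bar a$ continuous, and $\int_0^\cdot b_\e(Y^\e(s-))\,\di Z_\e(s)\Rightarrow b(0)Z^{(\alpha)}(\cdot)$ by the Kurtz--Protter stability theorem for stochastic integrals (whose uniform-tightness hypothesis for $\{Z_\e\}$ follows from \eqref{eq:Levy_lim1}); hence, by a standard argument, $W$ is a L\'evy process in the filtration generated by $(Y,W)$ and $Y$ solves \eqref{e:eqY}, that is, the model equation \eqref{e:model1} with $\sigma=b(0)$. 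Since \eqref{e:model1} has a unique weak solution (Theorem~3.1 in \cite{TanTsuWat74} together with the extension described in the text), all subsequential limits have the same law, so $Y^\e\Rightarrow Y$ along the full sequence. Finally, $Y$ solves the model equation with $\sigma=b(0)$, so Theorem~\ref{thm:PP} and Remark~\ref{rem:p_indep_of_sigma} give $\P(\lim_{t\to\infty}|Y(t)|=\infty)=1$ and $\P(\lim_{t\to\infty}Y(t)=\pm\infty)=\bar p_\pm$.

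\smallskip

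\noindent\emph{Main obstacle.} The delicate step is the identification of the limit in the second part: passing to the limit in $\int_0^\cdot b_\e(Y^\e(s-))\,\di Z_\e(s)$ is subtle because the drivers $Z_\e$ converge to a jump process of infinite variation, so no pathwise or bounded-variation shortcut is available and one must verify quantitatively the uniform-tightness (UCV) condition for $\{Z_\e\}$ from \eqref{eq:Levy_lim1} and the scaling of Lemma~\ref{l:eee}; moreover, the limit equation carries the non-Lipschitz drift $\bar a$, so the crucial external input is the \emph{weak uniqueness} for \eqref{e:model1}, which is precisely what makes the subsequential argument conclusive. An equivalent route would be to recast this part as convergence of the martingale problems of $Y^\e$ to that of $Y$, whose generator is $\mathcal L f(y)=\bar a(y)f'(y)+\int\big(f(y+b(0)z)-f(y)-b(0)zf'(y)\big)\,\nu^{(\alpha)}(\di z)$, together with well-posedness of the limiting martingale problem.
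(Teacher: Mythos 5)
Your route is essentially the paper's. Part 1 is the same computation: write the L\'evy--Khintchine exponent of $Z_\e$ in terms of the rescaled measures $\nu_\e$ of \eqref{e:nueps}, use Lemma \ref{l:lm} (tail convergence plus the uniform bound \eqref{eq:Levy_lim1}) to pass to the limit in the integral term, check $\mu_\e=\mu\,\e\e'/\e''\to0$ from the scaling relations, and upgrade marginal convergence to functional convergence since all processes have independent increments (the paper cites Corollary VII.3.6 of Jacod--Shiryaev). In Part 2 the only genuine difference is the identification tool: the paper establishes tightness by mimicking \S 2 of Chapter 5 of Gikhman--Skorokhod (boundedness \emph{in probability} on compacts) and identifies the limit via Theorem IX.4.8 of Jacod--Shiryaev, whereas you use the Kurtz--Protter UT/UCV stability theorem for $\int b_\e(Y^\e(s-))\,\di Z_\e(s)$; both are legitimate, and your explicit appeal to weak uniqueness of the model equation (Tanaka--Tsuchiya--Watanabe) to pass from subsequential limits to convergence of the full sequence is exactly the input the paper also relies on.

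There is, however, one concretely false intermediate claim: $\sup_{\e\in(0,1]}\E\sup_{s\le T}|Y^\e(s)|^2<\infty$. The drivers $Z_\e$ converge to the $\alpha$-stable process with $\alpha\in(1,2)$, which has infinite variance; quantitatively, $\E|Z_\e(1)-\E Z_\e(1)|^2=(\e/\e'')^2\,\e'\int z^2\,\nu(\di z)\sim c\,(\e/\e'')^{2-\alpha}/l_\nu(\e/\e'')\to\infty$ by \eqref{e:eee2} and \eqref{e:eee0}, and since the limit law of $Y(t)$ inherits a regularly varying tail of index $\alpha$ (a single large jump of $b(0)Z^{(\alpha)}$, which the drift $\bar a$ cannot undo because $\bar a(y)$ has the sign of $y$), a uniform $L^2$ bound together with Fatou's lemma along the weak convergence would force $\E|Y(t)|^2<\infty$, a contradiction. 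Note the contrast with Lemma \ref{l:X}, where the noise term is $\e\int b\,\di Z$ with a \emph{fixed} truncated driver, so uniform second moments are available there but not here. This flaw is local and repairable: argue tightness of $\{Y^\e\}$ via boundedness in probability on compact time intervals (as the paper does), or with moments of order $p<\alpha$, or by stopping at exit times from large balls; the UT verification and the rest of your identification argument only use the uniform tail control \eqref{eq:Levy_lim1} and $\mu_\e\to0$, so they are unaffected.
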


\begin{proof}

\noindent 
1. It is well known that in the case of L\'evy processes convergence of
marginal
 distributions implies the weak convergence in the Skorokhod space,
see \cite[Corollary VII.3.6]{JacodS-03}.

For some $\mu\in\bR$, the process $Z$ has the L\'evy--Khintchine
representation 
\ba
\ln\E \ex^{\i \lambda Z(1)}
=\i\mu\lambda +  \int_\bR \big(\ex^{\i \lambda  z} -1 - \i\lambda  z\big)\nu(\di z),\quad \lambda\in\bR,
\ea
whereas the rescaled process $Z_\e$
has the L\'evy--Khintchine representation
\ba
\ln \E \ex^{\i \lambda Z_\e(1)}=\ln \E \ex^{\i \frac{   \lambda  Z(\e') }{\e''/\e} 
}
&
=  \frac{\i\mu \lambda  \e'  }{\e''/\e}
+\e' \int_\mbR \Big(\ex^{\i \frac{   \lambda z   }{\e''/\e}} -1 - \frac{  \i  \lambda z   }{\e''/\e} \Big)\nu(\di z) \\
&=
\i \mu_\e \lambda+ 
\int_\mbR (\ex^{\i \lambda z} -1 -  {\i \lambda  z } ) \nu_\e(\di z),
\ea
with the jump measures $\nu_\e$ defined in \eqref{e:nueps}.

Hence, the integration by parts formula, Lebesgue's dominated
convergence theorem, \eqref{e:nuepsconv} and \eqref{eq:Levy_lim1} yield that for each $\lambda\in\bR$
\ba
\label{eq:601}
\int_{(0,\infty)} (\ex^{\i \lambda z} -1 -  {\i \lambda  z } ) \nu_\e(\di z)
&= -\i\lambda \int_{(0,\infty)}  (\ex^{\i \lambda z} -1  ) \nu_\e([z,\infty))\,
\di z\\
&\to - \i\lambda \int_{(0,\infty)}  (\ex^{\i \lambda z} -1  )
\nu^{(\alpha)}([z,\infty))\, \di z\\
&= \int_{(0,\infty)}  (\ex^{\i \lambda z} -1 -  {\i \lambda  z } )
\nu^{(\alpha)}(\di z),\quad \e\to 0.
\ea
The same convergence holds analogously for the negative tail.

Eventually it follows from the choice of $\e'$ and $\e''$ (see \eqref{e:g1g2} and \eqref{eq:727}) that 
\ba
\mu_\e=\mu\cdot \frac{\e\cdot \e'}{\e''}\sim \mu\cdot (\e')^{\frac{\alpha-1}{\alpha}}l_2\Big(\frac{1}{\e'}\Big)\to 0,\quad \e\to 0.
\ea
Therefore we obtain convergence
of the characteristic functions
\ba
\E \ex^{\i \lambda Z_\e(1)}\to \E \ex^{\i \lambda Z^{(\alpha)}(1)}, \
\e\to 0.
\ea
2. To show \eqref{eq:conv_Y}, first we note that for $\bar a$ defined in \eqref{e:bar_a} and $b\in C_b(\bR,\bR)$ the convergence
\ba
\label{eq:lim_coeff}
\lim_{\e\to 0}{a_\e(y)}=\bar a(y)\quad\text{and}\quad
\lim_{\e\to0}b_\e(y)=b(0),
\ea
holds point-wise and uniformly on compact intervals.
To prove that solutions $Y^\e$ converge to $Y$ we follow the standard two-step scheme that consists in showing the tightness of the family $\{Y^\e\}$ 
and the identification of the limit.

To show tightness, one mimics the arguments of \S 2 of Chapter 5 of \cite{GihSko-82}. Indeed, one shows that $Y^\e$ are bounded in probability 
on compact time intervals which together with the linear growth of $a$ implies the weak compactness of the integrals $\int_0^\cdot a_\e(Y^\e(s))\,\di s$. 
The weak compactness of the noise term $\int_0^\cdot b_\e(Y^\e(s))\,\di Z_\e(s)$ follows from the boundedness of $b_\e$ and the weak convergence 
\eqref{e:ZZ}. 

Eventually the identification of the limit is obtained with the help of Theorem IX.4.8 from \cite{JacodS-03}. 

Due to Theorem \ref{thm:PP} and Remark \ref{rem:p_indep_of_sigma}, the limiting process $Y$ diverges to
$\pm\infty$ with the selection probabilities $\bar p_\pm$.
\end{proof}

\section{Estimates for the noise\label{s:noise_estim}}

In this section we get estimates for a growth rate  of 
the noise term $\int_0^t  b_\e(Y^\e(s-))\,\di Z_\e(s)$ as $t\to\infty$
that are uniform in $\e$.
We start with the the following general result.

\begin{lem}
\label{lem:sup_Levy} 
Let $\tilde Z$ be a zero mean L\'evy process without a Gaussian component
and with a jump measure $\nu$
such that for some $C>0$ and $ \gamma\in (1,2)$ it satisfies
\ba
\label{e:Cx}
\int_{|z|> x}\nu(\di z) \leq \frac{C}{x^\gamma}, \quad x\geq 1.
\ea
and
\ba
\label{e:Cz}
\int_{|z|\leq 1} z^2\nu(\di z) \leq C.
\ea
Then for any $\theta>0$ and $\delta>0$ there exists a generic constant $K=K(C,\gamma,\delta,\theta)$
such that for any predictable process $\{\sigma(t)\}_{t\geq 0}$, $|\sigma(t)|\leq 1$ a.s., we have
\ba
\label{e:noise}
\P \Big(\sup_{t\geq 0} \frac{\int_0^t  \sigma(s)\,\di \tilde Z(s)  }{ 1+t^{\frac{1}{\gamma}+\delta} } \leq K\Big)
\geq 1-\theta.
\ea
\end{lem}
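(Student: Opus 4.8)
The plan is to decompose $\tilde Z$ into a compensated small-jump part $\tilde Z^{\le 1}$ (jumps of size $\le 1$) and a compensated large-jump part $\tilde Z^{>1}$, and to control the stochastic integral $N(t):=\int_0^t\sigma(s)\,\di\tilde Z(s)$ along the dyadic grid $t\in[2^n,2^{n+1}]$, $n\ge 0$ (and separately on $[0,1]$). On the $n$-th block, I would aim to show $\P(\sup_{t\le 2^{n+1}}|N(t)| > K\,2^{n(1/\gamma+\delta)})$ is summable in $n$, and then Borel--Cantelli together with a finite-$K$ adjustment on the first few blocks gives \eqref{e:noise} with probability $\ge 1-\theta$ once $K$ is large. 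For the small-jump part, $\int_0^t\sigma(s)\,\di\tilde Z^{\le 1}(s)$ is an $L^2$-martingale with predictable quadratic variation bounded by $t\int_{|z|\le1}z^2\,\nu(\di z)\le Ct$ (using $|\sigma|\le1$), so Doob's $L^2$-inequality controls its supremum on $[0,2^{n+1}]$ at scale $\sqrt{2^n}$, which since $\gamma<2$ is $o(2^{n/\gamma})$; Chebyshev then gives a bound decaying like $2^{-2n\delta}\cdot$const, summable in $n$. For the drift correction from truncating (the $\int_{|z|\le1}z\,\nu(\di z)$ type term and the need to recompensate jumps in $(1,\infty)$), I would absorb it using the zero-mean hypothesis, so that $N$ splits exactly into two martingales.

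The large-jump part is where the real work is. Here $\tilde Z^{>1}$ is a compound Poisson process with rate $\lambda:=\nu(\{|z|>1\})<\infty$ minus its linear compensator; by \eqref{e:Cx} (with $x=1$) we have $\lambda\le C$, and moreover the jump sizes have tail $\P(|J|>x)\le (C/\lambda)x^{-\gamma}$ for $x\ge1$. So $\int_0^t\sigma(s)\,\di\tilde Z^{>1}(s) = \sum_{s\le t}\sigma(s)\Delta\tilde Z(s) - \big(\int_{|z|>1}z\,\nu(\di z)\big)\int_0^t\sigma(s)\,\di s$; the compensator term is bounded by $t\cdot|\int_{|z|>1}z\,\nu(\di z)|\le Ct\cdot$const (finite since $\gamma>1$ makes $z$ integrable at the tails under \eqref{e:Cx}), hence $O(2^n)=O(2^{n/\gamma})$ on the $n$-th block up to a constant — this is exactly why the exponent $1/\gamma$ (not $1/\gamma+\delta$ yet) already appears, but note $2^n$ dominates $2^{n/\gamma}$, so actually this term must be folded into the $1+t$ part: observe $t/(1+t^{1/\gamma+\delta})$ is bounded only if $\gamma<1$... so I need $1/\gamma+\delta>1$, which holds since $\gamma<2$ forces $1/\gamma>1/2$, but not necessarily $>1$. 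The fix: the compensator term grows linearly, so I should instead keep $\tilde Z$'s zero-mean property to cancel the $O(t)$ growth against the small-jump compensator — i.e. do NOT split the compensators, but write $N(t)=M^{\le1}(t)+M^{>1}(t)$ where $M^{>1}(t)=\sum_{s\le t}\sigma(s)\Delta\tilde Z(s)\,\mathbb{I}(|\Delta\tilde Z(s)|>1) - \nu(\{|z|>1\}\cdot\mathbb{E}[\sigma J])$-style compensation keeping it a martingale with no residual drift. Then the pure-jump martingale $M^{>1}$ has, on $[0,2^{n+1}]$, roughly Poisson$(\lambda 2^{n+1})$-many jumps each with heavy tail index $\gamma$; the supremum of its absolute value is dominated (up to the bounded compensator drift, which is $O(2^n)$ — and this is the issue) by the largest jump.

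The cleanest route around the linear-growth obstruction: I would replace the naive compensator estimate by noting that $M^{>1}$ is a martingale and apply a Burkholder--Davis--Gundy or Marcinkiewicz-type bound at exponent $p\in(1,\gamma)$: $\mathbb{E}\sup_{t\le T}|M^{>1}(t)|^p\le K_p\,\mathbb{E}\big(\sum_{s\le T}|\Delta\tilde Z(s)|^2\mathbb{I}(|\Delta\tilde Z(s)|>1)\big)^{p/2}$ — but $p/2<1$ and with heavy tails this is delicate; alternatively use the $p$-th moment bound $\mathbb{E}\sup_{t\le T}|M^{>1}(t)|^p\le K_p\, T\int_{|z|>1}|z|^p\,\nu(\di z)$ valid for $p\in(1,2)$ for compensated jump integrals (Novikov/Kunita-type inequality), and $\int_{|z|>1}|z|^p\,\nu(\di z)<\infty$ precisely because $p<\gamma$. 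Taking $p\in(1,\gamma)$ with $p$ close to $\gamma$, this gives $\mathbb{E}\sup_{t\le 2^{n+1}}|M^{>1}(t)|^p\le K_p\,2^{n+1}$, so by Chebyshev $\P(\sup_{t\le2^{n+1}}|M^{>1}(t)|>K 2^{n(1/\gamma+\delta)})\le K_p 2^{n+1}/(K^p 2^{np(1/\gamma+\delta)})$, and since $p(1/\gamma+\delta)>p/\gamma>1$ (choosing $p>\gamma/(1+\gamma\delta)$ — possible as this is $<\gamma$), the exponent of $2^n$ is negative and the series converges; similarly for $M^{\le1}$ with $p=2$. Summing over $n\ge0$ and the $[0,1]$ block, choosing $K$ large enough makes the total probability $\le\theta$; this is \eqref{e:noise}. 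The main obstacle is precisely this: getting a martingale moment inequality at a fractional exponent $p\in(1,\gamma)$ that turns the heavy-tailed jumps into a clean $O(T)$ bound, so that the $1/\gamma+\delta$ growth rate (rather than the linear rate one naively gets from the compensator) comes out — and ensuring the decomposition leaves no residual linear drift, which is where the zero-mean hypothesis \eqref{e:Cz}/\eqref{e:Cx} on $\tilde Z$ is used essentially.
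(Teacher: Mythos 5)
Your strategy is correct, but it is genuinely different from the paper's. The paper follows Pruitt's scheme with a \emph{moving} truncation level: for a target deviation $A$ it splits the integral into the compensated jumps of size $\leq A$ (controlled by Doob's $L^2$ inequality and the truncated second-moment estimate), the event that some jump exceeds $A$ (probability $\leq CT/A^{\gamma}$), and the compensator of the jumps above $A$, which is \emph{identically zero} once $3CT\frac{\gamma}{\gamma-1}A^{-\gamma}\leq 1$; the dyadic summation over blocks $[2^n,2^{n+1}]$ is then the same as yours. You instead truncate at the fixed level $1$, keep both pieces as compensated (driftless) martingales — correctly using that $\tilde Z$ is zero mean with $\int_{|z|>1}|z|\,\nu(\di z)<\infty$, so $\int_0^t\sigma\,\di\tilde Z$ is exactly the sum of the two compensated Poisson integrals, which disposes of the linear-growth obstruction you worried about — and control the heavy-tailed piece by a Kunita/BDG-type bound $\E\sup_{t\leq T}|M^{>1}(t)|^p\leq K_p\,T\int_{|z|>1}|z|^p\nu(\di z)$ with $p\in(1,\gamma)$ chosen so that $p(\tfrac1\gamma+\delta)>1$; this inequality is standard for $p\in[1,2]$ (BDG plus subadditivity of $x\mapsto x^{p/2}$ applied to the quadratic variation), and the integral is finite precisely because $p<\gamma$, so your block probabilities are summable and scale like $K^{-p}$, giving \eqref{e:noise}. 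What each approach buys: the paper's argument is more elementary (only Doob in $L^2$, first-moment tail estimates, and an exact vanishing of the large-jump compensator, which produces the explicit threshold $K_0$), whereas yours requires the fractional-moment martingale inequality but avoids the level-$A$ truncation and yields genuine $L^p$ maximal bounds. Both are uniform in the predictable $\sigma$ with $|\sigma|\leq 1$ and depend on the noise only through $C$ and $\gamma$, which is what the subsequent corollary needs. The only blemish is notational: your written compensation of the large-jump sum should be $\int_0^t\sigma(s)\,\di s\cdot\int_{|z|>1}z\,\nu(\di z)$, i.e.\ the compensated integral $\int_0^t\int_{|z|>1}\sigma(s)z\,\tilde N(\di z,\di s)$, rather than the expression you sketched; with that fixed the argument goes through.
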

\begin{proof} 
Denote $T(x):=\nu((-x,x)^c)$, $x\geq 1$. With the help of the integration by parts and \eqref{e:Cx} we get for $x\geq 1$ that
\ba
\label{e:est1}
\int_{|z|>x} |z|\, \nu(\di z) =-\int_x^\infty z\, \di T(z)=-zT(z)\Big|_x^\infty+ \int_x^\infty T(z)\,\di z
\leq C  x^{1-\gamma}+\frac{C}{\gamma-1}x^{1-\gamma}=\frac{C\gamma}{\gamma-1}x^{1-\gamma}.
\ea
Furthermore, for $x\geq 1$
\ba
\label{e:est2}
\int_{0< |z|\leq x} z^2\, \nu(\di z) 
&\leq \int_{0< |z|\leq 1} z^2\, \nu(\di z)   +   \int_{1< |z|\leq x} z^2\, \nu(\di z) 
\leq C-z^2 T(z)\Big|_1^x+ 2\int_1^x z T(z)\,\di z
\\
&\leq 2C + \frac{2C}{2-\gamma}x^{2-\gamma}  \leq   2C\frac{3-\gamma}{2-\gamma} x^{2-\gamma} .
\ea

To show \eqref{e:noise}, we follow the reasoning by \cite{pruitt1981growth}. Let us use the L\'evy--It\^o representation of the process $\tilde Z$,
namely for a Poissonian random measure $N$ with the compensator $\nu(\di z)\di t$ we write
\ba
\tilde Z(t)=\int_0^t\int z\,\tilde N(\di z,\di s).
\ea
For arbitrary $A\geq 1$ and $T>0$ we estimate
\ba
\label{eq:sup_int}
\P\Big(\sup_{t\in [0,T]} &\Big|{\int_0^t  \sigma(s)\,\di \tilde Z(s)  }\Big|> A\Big)
\leq\P\Big(\sup_{t\in [0,T]} \Big|\int_0^t  \sigma(s) \int_{|z|\leq  A} z \tilde N(\di z, \di s)  \Big|>  \frac{A}{3}\Big)\\
&+ \P \Big(\int_0^T  \int_{|z|> A}   N(\di z, \di s)  >0\Big)
+\P \Big(  \int_0^T  \int_{  |z|> A}  |z| \nu(\di z) \di s  >  \frac{A}{3}\Big)=I_1+I_2+I_3.
\ea
By Doob's inequality and \eqref{e:est2} we obtain
\ba
\label{eq:1040}
I_1\leq \frac{36\int_0^T \E\sigma^2(s) \int_{|z|\leq A} z^2 \nu(\di z) \di s}{A^2}\leq
 \frac{ 36T  \int_{|z|\leq A} z^2 \nu(\di z)  }{A^2}\leq  \frac{3-\gamma  }{2-\gamma}\frac{72 C T}{A^{\gamma}}.
\ea
The inequality $1-\ex^{-x}\leq x$, $x\geq 0$, and \eqref{e:Cx} imply that
\ba\label{eq:1041}
I_2=1-\exp\Big(-T\int_{|z|>A}\nu(\di z)\Big)\leq T\int_{|z|>A}\nu(\di z)\leq \frac{ C T  }{A^{\gamma}}.
\ea
The item $I_3$ equals 0 if $T \int_{ |z|> A}  |z|\, \nu(\di z)\, \di s  \leq  A/3$.
By \eqref{e:est1} this is true if $3C T \frac{\gamma}{\gamma-1}A^{-\gamma}\leq 1$.

Hence for each $K>0$ we have
\ba
\P \Big(\sup_{t\geq 0} \frac{\int_0^t  \sigma(s)\,\di \tilde Z(s)  }{ 1+t^{\frac{1}{\gamma}+\delta} } &>  K\Big)
\leq  \P \Big(\sup_{t\in[0,1]}  {\int_0^t  \sigma(s)\,\di \tilde Z(s)  }  >K\Big)
+\sum_{n=0}^\infty
\P \Big(\sup_{t\in[2^n, 2^{n+1}]} \frac{\int_0^t  \sigma(s)\,\di \tilde Z(s)  }{ 1+t^{\frac{1}{\gamma}+\delta} } >  K\Big)\\
&\leq 
 \P \Big(\sup_{t\in[0,1]}  {\int_0^t  \sigma(s)\,\di \tilde Z(s)  } > K\Big) 
 + \sum_{n= 0}^\infty \P \Big(\sup_{t\in[2^n, 2^{n+1}]} \frac{\int_0^t  \sigma(s)\,\di \tilde Z(s)  }{ 2^{n({\frac{1}{\gamma}+\delta} )} }> K\Big)\\
&\leq \P \Big(\sup_{t\in[0,1]}  {\int_0^t  \sigma(s)\,\di \tilde Z(s)  }  >K\Big) 
+ \sum_{n= 0}^\infty \P \Big(\sup_{t\in[0, 2^{n+1}]} {\int_0^t  \sigma(s)\,\di \tilde Z(s)  } > K 2^{n({\frac{1}{\gamma}+\delta} )}\Big).
    \ea
Let us apply  \eqref{eq:sup_int}, \eqref{eq:1040}, \eqref{eq:1041}
 to the terms in the last line. Note that
all the respective items $I_3$ are zero if $K>K_0=(6C\frac{\gamma}{\gamma-1})^{1/\gamma}$.
Therefore for $C_1=C(1+72\frac{3-\gamma  }{2-\gamma})$ and $K>K_0$ we get
\ba
\P \Big(\sup_{t\geq 0} \frac{\int_0^t  \sigma(s)\,\di \tilde Z(s)  }{ 1+t^{\frac{1}{\gamma}+\delta} } > K\Big) 
\leq \frac{ C_1  }{K^{\gamma}} + \sum_{n\geq 0} \frac{ C_1 2^{n+1} }{K^\gamma{2^{n \gamma(\frac{1}{\gamma}+\delta) }}} 
= \frac{ C_1  }{K^{\gamma}}\Big(1+\frac{2^{2+\gamma\delta}}{2^{1+\gamma\delta}-1}\Big).
\ea
Choosing $K=K(C,\gamma,\delta,\theta)$ large enough we make the last probability less than $\theta$.
\end{proof}

\begin{cor}
\label{cor:upper_limits_noise}
Let $\theta>0$. Let  \emph{\textbf{H}}$_{b}$, \eqref{eq:fin_tail},
\eqref{e:eee1} and \eqref{e:eee2} be satisfied. Let $X^\ve$
be a solution to \eqref{eq:main_small} with any starting point, and let $Y^\ve(t)=X^\e(\e' t )/\e''$, $t\geq 0$,
be the rescaled process.
Then for any $\theta>0$, $T>0$ and $\delta>0$ there exists a generic constant $K=K(\alpha, \delta, \theta,T)$
such that for any
$\ve\in(0,1]$ 
we have
\ba
\label{e:cor}
 \P \Big(\sup_{t\in [0,\frac{T}{\ve'}]}
 \Big|\frac{\int_0^{t}  b_\ve(  Y^\e(s-))\,\di  {Z_\ve(  s)}  }{ 1+t^{\frac{1}{\alpha}+\delta} }\Big|\leq K\Big)
=
\P \Big(\sup_{t\in [0,T]}
 \Big|\frac{\frac{\ve}{ \ve''} \int_0^{t} b(  X^\e(s-))\,\di  {Z (  s)}  }{ 1+(\frac{t}{\ve'})^{\frac{1}{\alpha}+\delta} }\Big| \leq K\Big)
\geq 1-\theta.
\ea
\end{cor}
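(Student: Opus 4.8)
The plan is to derive Corollary~\ref{cor:upper_limits_noise} directly from Lemma~\ref{lem:sup_Levy} applied to the rescaled noise. First I would observe that the equality of the two probabilities in \eqref{e:cor} is purely a change of variables: writing $Z_\ve(t)=Z(\ve' t)/(\ve''/\ve)$ and $b_\ve(Y^\e(s-))=b(X^\e(\ve' s-))$, the substitution $s\mapsto \ve' s$ in the stochastic integral gives $\int_0^{t} b_\ve(Y^\e(s-))\,\di Z_\ve(s)=\frac{\ve}{\ve''}\int_0^{\ve' t} b(X^\e(u-))\,\di Z(u)$, and the time substitution $t=T/\ve'$ turns the supremum over $t\in[0,T/\ve']$ into the supremum over the original time horizon $[0,T]$. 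So it suffices to prove the left-hand inequality.

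Next I would normalize. The process $\tilde Z:=Z_\ve$ is a zero-mean (after the vanishing drift correction $\mu_\e\to 0$, cf.\ the proof of Theorem~\ref{t:ZY}) L\'evy process without Gaussian part, with jump measure $\nu_\ve$. The integrand is $\sigma(s):=b_\ve(Y^\e(s-))/\|b\|_\infty$ after dividing out $\|b\|_\infty$; this is predictable with $|\sigma|\le 1$, and $b$ is bounded by \textbf{A}$_b$ (written \textbf{H}$_b$ in the statement), so the extra factor $\|b\|_\infty$ is absorbed into the generic constant $K$. The remaining point is to verify that the family $\{\nu_\ve\}_{\ve\in(0,1]}$ satisfies the hypotheses \eqref{e:Cx} and \eqref{e:Cz} of Lemma~\ref{lem:sup_Levy} with a single $\gamma$ and a single constant $C$, uniformly in $\ve$. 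Here I would take any $\delta'\in(0,\min\{\alpha-1,2-\alpha\})$ and set $\gamma:=\alpha-\delta'\in(1,2)$. By part~2 of Lemma~\ref{l:lm}, $\nu_\ve((-x,x)^c)\le C'(x^{-(\alpha-\delta')}\vee x^{-(\alpha+\delta')})=C' x^{-\gamma}$ for $x\ge 1$, uniformly in $\ve$, which is \eqref{e:Cx}; and $\int_{|z|\le 1}z^2\,\nu_\ve(\di z)$ is controlled uniformly by the same bound (integrate the tail estimate, exactly as in \eqref{e:est2} of the proof of Lemma~\ref{lem:sup_Levy}), giving \eqref{e:Cz}.

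Then I would simply invoke Lemma~\ref{lem:sup_Levy} with this $\gamma$, the chosen $\delta$ (shrinking it if necessary so that $\frac1\gamma+\delta\le\frac1\alpha+\delta_{\mathrm{orig}}$, which only makes the claimed bound easier since a larger exponent in the denominator can only decrease the supremum on $[1,\infty)$; on $[0,1]$ the denominator is bounded below by $1$ regardless), and the given $\theta$, to obtain a constant $K=K(\alpha,\delta,\theta)$ — note $T$ enters only through the change of variables and is harmless — with
\ba
\P\Big(\sup_{t\ge 0}\frac{\int_0^t \sigma(s)\,\di \tilde Z(s)}{1+t^{\frac1\gamma+\delta}}\le K\Big)\ge 1-\theta,
\ea
and a fortiori the supremum over the finite range $t\in[0,T/\ve']$ satisfies the same bound. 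Reinstating the factor $\|b\|_\infty$ and undoing the change of variables yields \eqref{e:cor}.

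The only genuine subtlety — and the step I would flag as the main obstacle — is the \emph{uniformity in $\ve$} of the moment bounds on $\nu_\ve$: one must be careful that the slowly varying functions $l_\nu$ and the truncation at level $M$ (assumption \eqref{eq:fin_tail}) do not let the constant $C$ in \eqref{e:Cx}--\eqref{e:Cz} blow up as $\ve\to 0$. This is precisely what part~2 of Lemma~\ref{l:lm} was set up to deliver, via Potter's theorem and the choice of $\ve',\ve''$ in Lemma~\ref{l:eee}; once that uniform tail bound is in hand, the rest is a direct application of Lemma~\ref{lem:sup_Levy} together with bookkeeping for the time-space rescaling. A secondary routine point is checking that the compensator correction making $Z_\ve$ centered is the same $\mu_\ve\to 0$ already handled in Theorem~\ref{t:ZY}, so that Lemma~\ref{lem:sup_Levy}'s zero-mean hypothesis applies (the $O(\mu_\ve t)$ error is deterministic, bounded on $[0,T/\ve']$, and absorbed into $K$).
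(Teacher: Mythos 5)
Your overall route is the paper's route: the identity of the two probabilities is just the time--space rescaling \eqref{eq:Y_rescaling}--\eqref{eq:a_e,b_e}, the hypotheses \eqref{e:Cx}--\eqref{e:Cz} of Lemma \ref{lem:sup_Levy} are verified for the whole family $\{\nu_\e\}$ uniformly in $\e$ via part 2 of Lemma \ref{l:lm} (choosing $\gamma\in(1,\alpha)$ close enough to $\alpha$ so that the exponent $\tfrac1\gamma+\delta'$ can be matched with $\tfrac1\alpha+\delta$), the bounded coefficient $b$ is normalized away, and Lemma \ref{lem:sup_Levy} is invoked. All of that matches the paper's proof.

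However, the one step you dismiss as "secondary and routine" --- the fact that $Z_\e$ is not centered --- is precisely where the paper's proof does its only real work, and your justification for it fails as stated. The drift contribution to the integral is $\mu_\e\int_0^t b_\e(Y^\e(s-))\,\di s$ with $\mu_\e=\mu\,\e\e'/\e''$, so its supremum over $t\in[0,T/\e']$ is of order $|\mu|\sup_y|b(y)|\,T\,\e/\e''$, and $\e/\e''\to\infty$ by \eqref{e:eee0}; hence this error is \emph{not} bounded on $[0,T/\e']$ uniformly in $\e$ and cannot simply be absorbed into a constant. What is uniformly bounded is the \emph{ratio} of the drift term to the weight $1+t^{\frac1\alpha+\delta}$: writing $t=t^{1-\frac1\alpha-\delta}\,t^{\frac1\alpha+\delta}$ and using the consequence $\e/\e''\le C_1(\e')^{-\frac1\alpha-\delta}$ of \eqref{eq:727}, one gets a bound of the form $C_1|\mu|\sup_y|b(y)|\,T^{1-\frac1\alpha-\delta}$, which is exactly the computation the paper's proof consists of. Note that this is also where the $T$-dependence of the constant comes from, so your remark that "$T$ enters only through the change of variables and is harmless" and that one may take $K=K(\alpha,\delta,\theta)$ is not correct: the drift forces $K=K(\alpha,\delta,\theta,T)$, as in the statement. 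With this estimate supplied, your argument closes and coincides with the paper's.
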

\begin{proof}
The uniform estimate \eqref{eq:Levy_lim1} from Lemma \ref{l:lm} implies that for any $\gamma\in(1,\alpha)$
there is a constant $C>0$ such that
 the inequalities
\ba
 \int_{|z|>x} \nu_\e(\di z) \leq \frac{C}{x^\gamma},\ x\geq 1\quad \text{and}\quad
 \int_{|z|\leq 1} z^2 \nu_\e(\di z)\leq C,
\ea
hold uniformly over $\e\in(0,1]$.% due to convergence $Z_\e\Rightarrow Z$.

The only difference between the statement of Lemma
\ref{lem:sup_Levy}
and this corollary is that the processes $\{Z_\e\}$ and the process $Z$ respectively are not necessarily centered and 
that the supremum is taken over a finite $\e$-dependent
interval. Hence we have to estimate the impact of the deterministic drift.
It is more convenient to treat the deterministic linear mean value component $\mu t$ of $Z$, 
$\mu\in\bR$. Indeed, for $\delta>0$ due to \eqref{eq:727} there is a constant $C_1=C_1(\alpha,\delta)$ such that 
$ \e/ \e'' \leq C_1 \cdot (\e')^{-\frac{1}{\alpha} -\delta}$ for $\e\in(0,1]$.
Therefore for some constant $C_2>0$ we have
\ba
\sup_{t\in [0,T]} \Big|\frac{\frac{\ve}{ \ve''}\mu \int_0^t b(  X^\e(s-))\,\di s  }{ 1+(\frac{t}{\ve'})^{\frac{1}{\alpha}+\delta} }\Big|
&\leq \sup_{t\in[0,T]}\frac{t \cdot \frac{\ve }{ \ve''}\cdot  |\mu|\cdot \sup_y|b(y)| }{1+(\frac{t}{\ve'})^{\frac{1}{\alpha}+\delta} }
\leq C_1 \cdot |\mu|\cdot \sup_y|b(y)|\cdot  \sup_{t\in[0,T]}\frac{ t(\ve')^{-\frac{1}{\alpha} -\delta}   }{1+(\frac{t}{\ve'})^{\frac{1}{\alpha}+\delta} }\\
\\
&
=   C_1 \cdot |\mu|\cdot \sup_y|b(y)|\cdot  
\sup_{t\in[0,T]}\frac{ t^{1-\frac{1}{\alpha}-\delta} (\frac{t}{\ve'})^{ \frac{1}{\alpha} +\delta}   }{1+(\frac{t}{\ve'})^{\frac{1}{\alpha}+\delta} }\\
&\leq C_1 \cdot |\mu|\cdot \sup_y|b(y)|\cdot  
 T^{1-\frac{1}{\alpha}-\delta}  \cdot \sup_{s\geq 0}\frac{ s^{\frac{1}{\alpha}+\delta}  }{1+s^{\frac{1}{\alpha}+\delta} }
=: K_0(\alpha,\delta,T),
\ea
that gives us the lower bound for $K$ in \eqref{e:cor}.
\end{proof}

\section{Exit of $X^\e$ from the time-space box $[0, T_0\e']\times [-R\e'',R\e'']$\label{s:exit}}  

In the following Lemma we estimate the exit time of $X^\e$ from a small neighborhood
of $0$. Here we essentially use the representation of $X^\e$ in terms of $Y^\e$ and establish the proper 
relations between its small time and small space behaviour.
% \marginpar{I: we did not define $\tau_R$!!!}
For $R>0$ and a stochastic process $X$ we denote the first exit times
\ba
\tau^{X}_R=\inf\{t\geq 0\colon X (t)> R\},\quad \tau^{X}_{-R}=\inf\{t\geq 0\colon X(t)< -R\}.
\ea
 
\begin{lem}
\label{lem:exit-time-is-small}
For any $\theta>0$ and any $R>0$ there is $T_0=T_0(R)>0$ such that
\ba\label{eq:1109}
\liminf_{\e\to 0}\P\Big( \tau^{X^\e}_{R\e''}\wedge \tau^{X^\e}_{-R\e''}\leq T_0\e' 
\Big)\geq 1-\theta.
\ea
\end{lem}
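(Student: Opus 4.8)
The plan is to reduce the statement about $X^\e$ exiting the small time-space box $[0,T_0\e']\times[-R\e'',R\e'']$ to the corresponding statement about the rescaled process $Y^\e(t)=X^\e(\e't)/\e''$ exiting the fixed box $[0,T_0]\times[-R,R]$, and then to invoke the weak convergence $Y^\e\Rightarrow Y$ from Theorem \ref{t:ZY} together with Theorem \ref{thm:PP} applied to the limiting model process $Y$. The key observation is the elementary time-space equivalence
\ba
\Big\{\tau^{X^\e}_{R\e''}\wedge\tau^{X^\e}_{-R\e''}\leq T_0\e'\Big\}
=\Big\{\tau^{Y^\e}_{R}\wedge\tau^{Y^\e}_{-R}\leq T_0\Big\},
\ea
so it suffices to show that for every $\theta>0$ and $R>0$ one can choose $T_0=T_0(R)$ with
\ba
\liminf_{\e\to0}\P\Big(\tau^{Y^\e}_{R}\wedge\tau^{Y^\e}_{-R}\leq T_0\Big)\geq 1-\theta.
\ea

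First I would recall from Theorem \ref{t:ZY} that $Y^\e\Rightarrow Y$ in $D([0,\infty),\bR)$, where $Y$ solves the model equation \eqref{e:eqY} driven by $b(0)Z^{(\alpha)}$, and that by Theorem \ref{thm:PP} (via Remark \ref{rem:p_indep_of_sigma}) one has $\P(\lim_{t\to\infty}|Y(t)|=\infty)=1$. In particular, for $Y$ the exit time $\tau^{Y}_{R}\wedge\tau^{Y}_{-R}$ is a.s.\ finite, so there exists $T_0=T_0(R,\theta)$ with $\P(\tau^{Y}_{R}\wedge\tau^{Y}_{-R}\leq T_0)\geq 1-\theta/2$; since $|Y|\to\infty$ we may moreover pick $T_0$ so that $\P(\inf_{t\ge T_0}|Y(t)|>R)\ge 1-\theta/2$, which guarantees that at time $T_0$ the process $Y$ is strictly outside $[-R,R]$ with high probability. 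The second step is to pass this information from $Y$ to $Y^\e$. The map $w\mapsto \inf\{t\ge0:|w(t)|>R\}$ is not continuous on all of $D$, so I would instead work with the a.s.\ continuous (under $\Law Y$) functional $w\mapsto w(T_0)$, or more robustly with $w\mapsto \sup_{t\in[0,T_0]}|w(t)|$ after noting that $\Law Y$ charges no path whose supremum over $[0,T_0]$ equals exactly $R$ (this uses that the strictly stable driver $Z^{(\alpha)}$ has a continuous one-dimensional law and the exit boundary is hit transversally). Then by the mapping theorem, $\P(\sup_{t\in[0,T_0]}|Y^\e(t)|>R)\to\P(\sup_{t\in[0,T_0]}|Y(t)|>R)\ge 1-\theta$, and $\{\sup_{[0,T_0]}|Y^\e|>R\}=\{\tau^{Y^\e}_R\wedge\tau^{Y^\e}_{-R}\le T_0\}$, giving the claim.

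The main obstacle is the standard one in such exit-time arguments: the exit time and the running supremum are not continuous functionals on the Skorokhod space, so one must rule out that the limiting law $\Law Y$ concentrates mass on the "bad" set where $\sup_{[0,T_0]}|Y|$ equals the threshold $R$ or where the path touches but does not cross $\pm R$. I would handle this by a slight enlargement/perturbation of the threshold: for all but countably many $R'$ in a neighbourhood of $R$, the set $\{w:\sup_{[0,T_0]}|w|=R'\}$ is a $\Law Y$-null set (the values $R'$ for which it is not null are at most countable since these sets are disjoint), so choosing such an $R'$ slightly below $R$ and applying the mapping theorem at level $R'$ yields
\ba
\liminf_{\e\to0}\P\Big(\sup_{t\in[0,T_0]}|Y^\e(t)|>R\Big)\ge
\liminf_{\e\to0}\P\Big(\sup_{t\in[0,T_0]}|Y^\e(t)|>R'\Big)
=\P\Big(\sup_{t\in[0,T_0]}|Y(t)|>R'\Big)\ge 1-\theta,
\ea
which is exactly \eqref{eq:1109} after translating back through the rescaling. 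The remaining routine points — finiteness of the exit time for $Y$ and the measurability/continuity of the relevant functionals — follow from Theorem \ref{thm:PP}, Theorem \ref{t:ZY}, and standard properties of the Skorokhod topology.
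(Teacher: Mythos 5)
Your reduction to the rescaled process via $\{\tau^{X^\e}_{R\e''}\wedge\tau^{X^\e}_{-R\e''}\leq T_0\e'\}=\{\tau^{Y^\e}_{R}\wedge\tau^{Y^\e}_{-R}\leq T_0\}$ is exactly the paper's starting point, but from there the two arguments diverge. The paper does not use the functional convergence $Y^\e\Rightarrow Y$ at all for this lemma: it fixes $\e_0$ so that $b(0)/2\le b_\e\le 2b(0)$ on $[-R,R]$, lets $\sigma_\e$ be the first time $Z_\e$ has a jump exceeding $6R/b(0)$ in modulus (so that either $Y^\e$ has already left $[-R,R]$ or this jump of $Y^\e$, of size at least $3R$, throws it out), and then uses the convergence of the rescaled jump measures to get $\E\sigma_\e\to\big(\int_{|z|>6R/b(0)}\nu^{(\alpha)}(\di z)\big)^{-1}$, so Chebyshev's inequality yields the claim with an explicit $T_0(R)$. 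This is more elementary than your route: it needs only Lemma \ref{l:lm}/part 1 of Theorem \ref{t:ZY} and completely avoids the Skorokhod-topology issues around exit times and running suprema, which your argument must confront. Your route (transience of the limit $Y$ from Theorem \ref{thm:PP}, plus the continuous mapping/portmanteau step for $\sup_{[0,T_0]}|\cdot|$) is in the spirit of the paper's subsequent Corollary \ref{cor:exitX} and is viable, and your device of perturbing the threshold to a level that is not an atom of the law of $\sup_{[0,T_0]}|Y|$ is the right way to handle the discontinuity of the supremum functional.

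However, as written your final step is backwards: you take $R'$ slightly \emph{below} $R$, but then $\{\sup_{[0,T_0]}|Y^\e|>R\}\subseteq\{\sup_{[0,T_0]}|Y^\e|>R'\}$, so $\P\big(\sup_{[0,T_0]}|Y^\e|>R\big)\le\P\big(\sup_{[0,T_0]}|Y^\e|>R'\big)$ and the displayed ``$\ge$'' fails. The fix stays inside your scheme: since exiting the larger box $[-R',R']$ by time $T_0$ implies exiting $[-R,R]$, choose $R'$ slightly \emph{above} $R$ at a non-atom level of $\sup_{[0,T_0]}|Y|$ (all but countably many levels qualify), and choose $T_0$ using $|Y(t)|\to\infty$ a.s.\ so that $\P\big(\sup_{[0,T_0]}|Y|>2R\big)\ge1-\theta$, which dominates the event at level $R'\in(R,2R)$; then the portmanteau/continuous-mapping step (using that $Y$ has a.s.\ no jump at the fixed time $T_0$, so the supremum over $[0,T_0]$ is a.s.\ a continuity functional) gives $\liminf_{\e\to0}\P\big(\tau^{Y^\e}_{R}\wedge\tau^{Y^\e}_{-R}\le T_0\big)\ge\P\big(\sup_{[0,T_0]}|Y|>R'\big)\ge1-\theta$, which is \eqref{eq:1109}. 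Also note that your parenthetical justification that $\Law Y$ puts no mass on $\{\sup_{[0,T_0]}|w|=R\}$ for the \emph{given} $R$ is not substantiated; the countable-exceptional-set argument you give afterwards is the correct (and sufficient) substitute, so rely on that alone.
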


 \begin{proof}
 Recall that $\e'=\e'(\e)$ and $\e''=\e''(\e)$ are chosen according to Lemma \ref{l:eee}. 
 Note that due to rescaling \eqref{eq:Y_rescaling}
\ba
\label{eq:tau-tau}
\tau_{\pm R \ve''}^{X^\ve}= \ve' \tau_{\pm R }^{Y^\ve}, \qquad X^\ve(\tau_{\pm R \ve''}^{X^\ve})= \ve'' Y^\ve(\ve' \tau_{\pm R }^{Y^\ve}).
\ea
Let $R>0$ and choose $\ve_0\in (0,1]$ be such that
\ba
0<\frac{b(0)}{2}\leq \inf_{|y|\leq R,\, \e\in(0,\e_0]}b(\e'' y) =\inf_{|y|\leq R,\, \e\in(0,\e_0]} b_{\e}(y)\leq 
\sup_{|y|\leq R,\, \e\in(0,\e_0]} b_{\e}(y) =\sup_{|y|\leq R,\, \e\in(0,\e_0]}b(\e'' y) \leq 2b(0).
\ea
Also note that $Z_\e\Rightarrow Z^{(\alpha)}$ by Theorem \ref{t:ZY}
so that $Z_\e$ has unbounded jumps in the limit as $\e\to 0$.
Let $\sigma_\e$ be the first jump time such that 
$|\Delta Z_\e(\sigma_\e)|>6R/b(0)$.
Then $|\Delta Y^\ve(\sigma_\e)|>3R$ and hence $\tau_{R}^{Y^\ve}\leq \sigma_\e$.
Eventually \eqref{e:ZZ} yields
\ba
\lim_{\e\to 0}\E \sigma_\e = \Big(\int_{|z|>6R/b(0)} \nu^{(\alpha)}(\di z)\Big)^{-1}
\ea
and the statement of the Lemma follows from \eqref{eq:tau-tau} and Chebyshev's inequality.
 \end{proof}

\begin{cor}\label{cor:exitX}
For any $\theta>0$ %and any $R>0$ 
there exist $R>0$ large enough and $T_0>0$ %,  and $R_1>R$ 
such that
\ba
&\limsup_{\e\to 0}\Big|\P\Big(\tau_{R\e''}^{X^\e}<\tau_{-R\e''}^{X^\e}\leq T_0\e'\Big)-\bar p_+\Big|\leq \theta,\\
&\limsup_{\e\to 0}\Big|\P\Big(\tau_{-R\ve''}^{X^\e}<\tau_{R\ve''}^{X^\e}\leq T_0\e' \Big)-\bar p_-\Big|\leq \theta.
\ea

\end{cor}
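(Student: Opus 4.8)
The plan is to derive Corollary~\ref{cor:exitX} from Lemma~\ref{lem:exit-time-is-small} together with the convergence statement of Theorem~\ref{t:ZY}, by transferring everything through the rescaling \eqref{eq:tau-tau}. First I would rewrite the events in terms of the rescaled process $Y^\e$: by \eqref{eq:tau-tau},
\ba
\{\tau_{R\e''}^{X^\e}<\tau_{-R\e''}^{X^\e}\leq T_0\e'\}
=\{\tau_{R}^{Y^\e}<\tau_{-R}^{Y^\e}\leq T_0\},
\ea
and similarly for the other event, so it suffices to control $\P(\tau_{R}^{Y^\e}<\tau_{-R}^{Y^\e}\leq T_0)$ as $\e\to0$. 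By Theorem~\ref{t:ZY}, $Y^\e\Rightarrow Y$ in $D([0,\infty),\bR)$, where $Y$ solves the model equation \eqref{e:eqY}; hence I would like to pass to the limit in these exit events. This requires the usual care with the Skorokhod topology: the maps $x\mapsto(\tau_R^x,\tau_{-R}^x)$ are continuous at paths that cross the levels $\pm R$ cleanly, and since $Y$ is a solution of \eqref{e:eqY} driven by a stable process with an absolutely continuous (indeed, everywhere positive) transition density off $0$ and unbounded jumps, almost every realization of $Y$ hits $(R,\infty)$ (or $(-\infty,-R)$) strictly, so the boundary events $\{\tau_R^Y=T_0\}$, $\{\tau_{-R}^Y=T_0\}$, $\{\tau_R^Y=\tau_{-R}^Y\}$ have probability zero. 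Therefore the Portmanteau theorem gives
\ba
\lim_{\e\to0}\P\big(\tau_{R}^{Y^\e}<\tau_{-R}^{Y^\e}\leq T_0\big)
=\P\big(\tau_{R}^{Y}<\tau_{-R}^{Y}\leq T_0\big).
\ea

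Next I would identify the right-hand limit with $\bar p_+$ up to an error that is small for $R,T_0$ large. By Theorem~\ref{thm:PP} (via Remark~\ref{rem:p_indep_of_sigma}, since the diffusion coefficient in \eqref{e:eqY} is the constant $b(0)>0$), the process $Y$ satisfies $|Y(t)|\to\infty$ a.s.\ and $\P(Y(t)\to+\infty)=\bar p_+$, $\P(Y(t)\to-\infty)=\bar p_-$. On the event $\{Y(t)\to+\infty\}$ the process eventually exceeds every level and, by strong Markov/irreducibility considerations, reaches level $R$ before level $-R$ with probability tending to $1$ as $R\to\infty$; and on $\{\tau_R^Y<\tau_{-R}^Y\}$ the exit happens in finite time, so $\P(\tau_R^Y<\tau_{-R}^Y\leq T_0)\uparrow\P(\tau_R^Y<\tau_{-R}^Y)$ as $T_0\to\infty$. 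Combining, for any $\theta>0$ one can first choose $R$ large so that $|\P(\tau_R^Y<\tau_{-R}^Y)-\bar p_+|\leq\theta/2$, and then $T_0=T_0(R)$ large (also feeding into Lemma~\ref{lem:exit-time-is-small}) so that $|\P(\tau_R^Y<\tau_{-R}^Y\leq T_0)-\P(\tau_R^Y<\tau_{-R}^Y)|\leq\theta/2$. This yields $\limsup_{\e\to0}|\P(\tau_{R\e''}^{X^\e}<\tau_{-R\e''}^{X^\e}\leq T_0\e')-\bar p_+|\leq\theta$, and the second inequality follows by symmetry, exchanging the roles of $\pm R$ and $\bar p_\pm$.

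I should also reconcile this with Lemma~\ref{lem:exit-time-is-small}: that lemma already guarantees $\liminf_{\e\to0}\P(\tau_{R\e''}^{X^\e}\wedge\tau_{-R\e''}^{X^\e}\leq T_0\e')\geq1-\theta$ for suitable $T_0=T_0(R)$, i.e.\ with high probability the box is exited in time $T_0\e'$ through one of the two sides; what the corollary adds is the \emph{split} of this probability between the two sides, which is exactly what the weak convergence $Y^\e\Rightarrow Y$ and Theorem~\ref{thm:PP} provide. So in the write-up I would invoke Lemma~\ref{lem:exit-time-is-small} to handle the ``$\leq T_0\e'$'' part uniformly and then the Portmanteau argument above to handle the side selection.

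The main obstacle I anticipate is the continuity-of-the-exit-map argument: one must rule out, for the limit process $Y$, pathological boundary behaviour at the levels $\pm R$ (tangential crossing, crossing exactly at time $T_0$, or simultaneous consideration of $\tau_R^Y=\tau_{-R}^Y$), and also ensure that the Skorokhod convergence $Y^\e\Rightarrow Y$ is not spoiled by jumps landing near $\pm R$. This is where one uses that $Z^{(\alpha)}$ has a continuous jump distribution (so $Y$ does not charge a level at a fixed threshold with positive probability) and that the drift $\bar a$ is sublinear so there is no explosion before $T_0$; a short argument with the almost-sure properties of stable-driven SDEs, or alternatively a direct estimate that $\P(Y$ spends positive time at level $\pm R)=0$, closes the gap. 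The rest is bookkeeping with $\theta$, $R$, and $T_0$.
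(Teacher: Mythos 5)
Your proposal follows essentially the same route as the paper, whose proof of this corollary is exactly the combination of the rescaling identity \eqref{eq:tau-tau}, the exit-time bound \eqref{eq:1109}, the weak convergence $Y^\e\Rightarrow Y$ of Theorem \ref{t:ZY}, and the selection probabilities of Theorem \ref{thm:PP}. You simply spell out the Portmanteau/continuity-of-exit-map details and the choice of $R$ and $T_0$ that the paper leaves implicit, and that filling-in is correct.
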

\begin{proof}
The result follows from \eqref{eq:1109}, \eqref{eq:tau-tau}, and  Theorems \ref{thm:PP} and \ref{t:ZY}.
\end{proof}

\section{Behaviour of $X^\e$ upon exit from the time-space box $[0,T_0\e']\times [-R\e'',R\e'']$\label{s:exit2}. Proof of the 
main result}  

For definiteness, let us consider only dynamics on the positive half line.

\begin{lem}
1. For each $\gamma\in (0,\beta)$ there is $K_\gamma>0$ such that for all $x\geq 1$ and $\e\in(0,1]$
\ba
\label{e:Kgamma}
a_\e(x)\geq K_\gamma x^{\beta-\gamma}.
\ea
2. For any $\kappa\in(0,1)$ there exists $\mu\in(0,1)$ such that
\ba
\label{eq:a-a-estim}
\inf_{\frac{x}{y}\in[1-\mu, 1+\mu]}\frac{a_\ve(x)}{a_\ve(y)} 
> 1-\kappa .
\ea
\end{lem}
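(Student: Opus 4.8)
The plan is to transfer both statements to properties of the fixed drift $a$ via the identity $a_\e(x)=a(\e'' x)/(\e''/\e')$ from \eqref{eq:a_e,b_e}, and then to use that, by \textbf{A}$_{a}$, $a$ is regularly varying at $0$ with index $\beta$, together with the calibration \eqref{e:eee1} of the scales; both bounds are, roughly speaking, uniform-in-$x$ reinforcements of the pointwise limit \eqref{eq:lim_coeff}. For part 1 I would first use \eqref{e:eee1} and \eqref{eq:L_pm} to fix $\e_0\in(0,1]$ and $c>0$ with $1/(\e''/\e')\ge c/((\e'')^\beta l(1/\e''))$ for all $\e\le\e_0$. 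Since $a(\e'' x)=(\e'')^\beta x^\beta L_+(\e'' x)$, this reduces the claim for $\e\le\e_0$ to the uniform lower bound $L_+(\e'' x)/l(1/\e'')\ge K'\,x^{-\gamma}$ over $x\ge 1$, whence $a_\e(x)\ge cK'\,x^{\beta-\gamma}$. I would then split according to whether $\e'' x$ is smaller than a fixed small constant $r_0$ or not. If $\e'' x<r_0$, choosing $r_0$ from the asymptotics \eqref{eq:L_pm} gives $L_+(\e'' x)\ge\tfrac{A_+}{2}\,l(1/(\e'' x))=\tfrac{A_+}{2}\,l(x/\e'')$, and Potter's theorem \cite[Theorem 1.5.6]{BinghamGT-87} applied to $l$ yields $l(x/\e'')/l(1/\e'')\ge\tfrac12 x^{-\gamma}$ once $1/\e''$ exceeds the Potter threshold (the two arguments then exceed it as well, and their ratio is $x\ge 1$). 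If $\e'' x\ge r_0$, then $L_+(\e'' x)\ge\inf_{y\ge r_0}L_+(y)>0$ while $x\ge r_0/\e''$, so $x^{-\gamma}\le r_0^{-\gamma}(\e'')^\gamma$ and the required inequality reduces to $(\e'')^\gamma l(1/\e'')\to 0$, which holds because a slowly varying function is $o(t^\gamma)$. Finitely many shrinkings of $\e_0$ settle the case $\e\le\e_0$; for $\e\in[\e_0,1]$ the factors $\e''$ and $\e''/\e'$ lie in a compact subset of $(0,\infty)$, so the bound follows at once from $(\e'' x)^\beta\ge(\e'')^\beta x^{\beta-\gamma}$ for $x\ge 1$ and $L_+(\e'' x)\ge\inf_{y\ge\e''}L_+(y)>0$, and one takes $K_\gamma$ to be the minimum of the constants obtained.

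For part 2 the normalizing factor cancels, $a_\e(x)/a_\e(y)=a(\e'' x)/a(\e'' y)$, and since $\e''$ merely rescales both arguments by the same factor, $\inf_{x/y\in[1-\mu,1+\mu]}a_\e(x)/a_\e(y)=\inf_{u/v\in[1-\mu,1+\mu]}a(u)/a(v)$ with the last infimum over all $u,v>0$; in particular the bound is automatically uniform in $\e$. Writing $a(u)/a(v)=(u/v)^\beta\,L_+(u)/L_+(v)$ and first taking $\mu$ so small that $(1-\mu)^\beta$ is close to $1$, it remains to prove that $L_+(u)/L_+(v)\to 1$ as $u/v\to 1$, uniformly over $u,v>0$. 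I would cover $(0,\infty)$ by the region where $\min(u,v)$ is small, on which $t\mapsto L_+(1/t)$ is slowly varying at $\infty$ (by \eqref{eq:L_pm}) and the uniform convergence theorem for slowly varying functions \cite[Theorem 1.2.1]{BinghamGT-87} makes $L_+(u)/L_+(v)-1$ small whenever $v/u$ stays in a fixed compact set, and the region where $\min(u,v)$ is bounded below, on which $L_+$ is continuous, positive and eventually constant, hence uniformly continuous and bounded away from $0$ and $\infty$, so that $u/v$ close to $1$ forces $L_+(u)/L_+(v)$ close to $1$. Since $u/v\in[1-\mu,1+\mu]$ with small $\mu$ keeps $u$ and $v$ within a fixed factor of one another, every pair lies in one of the two regions, and one takes $\mu$ to be the smallest of the resulting thresholds.

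The routine but slightly delicate point, and the step I would watch most carefully, is the uniformity over all $x\ge 1$ in part 1 once $\e'' x$ leaves the regime near $0$ in which \eqref{eq:L_pm} is informative: this is precisely where one pays the extra factor $x^{-\gamma}$ through Potter's inequality for $l$, and where one uses that slowly varying functions grow more slowly than any positive power in order to absorb $l(1/\e'')$. The remaining estimates are routine bookkeeping with slowly varying functions.
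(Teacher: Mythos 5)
Your proposal is correct and takes essentially the same route as the paper: rewrite $a_\e$ through $a$ and control the normalization $\e''/\e'$ via \eqref{e:eee1}, then split according to whether $\e'' x$ is below a small threshold, using the asymptotics \eqref{eq:L_pm} together with Potter's bounds for the slowly varying factor in the first regime and the fact that $l$ grows slower than any power in the second, and for part 2 reduce to the uniform ratio limit $L_+(u)/L_+(v)\to 1$ as $u/v\to 1$ (the paper compresses this step to continuity of $L_+$ plus Potter's bounds, relying as you implicitly do on the normalization $L_+(x)=L_+(x\wedge 1)$ of Remark \ref{r:truncate}). The only blemish is the slip $l\big(\tfrac{1}{\e'' x}\big)=l\big(\tfrac{x}{\e''}\big)$, which is not an identity; the quantity you actually need is $l\big(\tfrac{1}{\e'' x}\big)/l\big(\tfrac{1}{\e''}\big)$, and the Potter estimate you invoke (arguments above the threshold, ratio equal to $x\geq 1$) applies to it verbatim, so the argument stands.
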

\begin{proof}
1. Recall that according to Assumption \textbf{H}$_a$ and Remark \ref{r:truncate}, $a(x)=x^\beta L_+(x\wedge 1)$, $x>0$, and $a(0)=0$.
Hence
\ba
\label{e:qw}
a_\e(x)=\frac{a(\e'' x)}{\e''/\e'}&=\e'\cdot (\e'')^{\beta-1}\cdot x^\beta L_+((\e'' x)\wedge 1) \\
&=\e'\cdot (\e'')^{\beta-1}\cdot l\Big(\frac{1}{\e''}\Big)\cdot x^\beta \cdot \frac{L_+((\e'' x)\wedge 1)}{A_+ l\big(\frac{1}{x\e''}\vee 1\big)}
\cdot\frac{A_+ l\big(\frac{1}{x\e''}\vee 1\big)}{l\big(\frac{1}{\e''}\big) }.
\ea
The equivalence \eqref{e:eee1} guarantees that $\e'\cdot (\e'')^{\beta-1}\cdot l(\frac{1}{\e''})\geq C_1>0$ for some $C_1>0$ and $\e\in(0,1]$. 

Let $\gamma\in (0,\beta)$, $x\geq 1$ and $\e\in(0,1]$. We consider two cases.

\noindent
a) For $x\e''< 1$,  with the help of Potter's theorem \cite[Theorem 1.5.6 (ii)]{BinghamGT-87} applied to the function $l$ we get
\ba
a_\e(x)\geq C_1 \cdot x^\beta \cdot \inf_{0<y<1}\frac{L_+(y)}{A_+ l\big(\frac{1}{y}\big)}
\cdot A_+\cdot \frac{l\big(\frac{1}{x\e''}\big)}{l\big(\frac{1}{\e''}\big) }
\geq C_2 \cdot x^{\beta-\gamma}
\ea
for some $C_2=C_2(\gamma)>0$.

\noindent
b) For $x\e''\geq 1$ applying Potter's theorem again we get 
\ba
a_\e(x)\geq C_1 \cdot x^\beta \cdot \frac{L_+(1)}{ l(1)}
\cdot\frac{ l(1)}{l\big(\frac{1}{\e''}\big) }\geq C_2\cdot  x^\beta \cdot (\e'')^\gamma\geq C_3\cdot  x^{\beta-\gamma}
\ea
for some $C_3=C_3(\gamma)$, and \eqref{e:Kgamma} follows with $K_\gamma=C_2\wedge C_3$.

\noindent 
2. 
To prove \eqref{eq:a-a-estim} we note that
\ba
\inf_{\frac{x}{y}\in[1-\mu, 1+\mu]}\frac{a_\ve(x)}{a_\ve(y)} 
= \inf_{\frac{x}{y}\in[1-\mu, 1+\mu]}\frac{a(x)}{a(y)} 
=(1-\mu)^\beta \cdot \inf_{\frac{x}{y}\in[1-\mu, 1+\mu]}\frac{L_+(x\wedge 1)}{L_+(y\wedge 1)} 
=C(\mu)\cdot (1-\mu)^\beta, 
\ea
where $0<C(\mu)\to 1$ as $\mu\to 0$ by continuity of $L_+$ and Potter's bounds. Hence for any $\kappa\in (0,1)$, the estimate 
\eqref{eq:a-a-estim} holds for $\mu$ small enough. 
\end{proof}

\begin{lem}
Let $\gamma\in(0,\beta)$.
Then for any $y\geq 1$, $\kappa\in(0,1)$ and any $\e\in(0,1]$ the solution of the  ODE
\ba
\zeta^\ve_{\kappa}(t;y) = y + (1-\kappa) \int_0^t a_\ve(\zeta_{\kappa}^\e(s;y))\, \di s
\ea
satisfies
\ba
\label{eq:1265}
\zeta^\ve_{\kappa}(t;y)\geq y+K t^{\frac{1}{1-\beta+ \gamma}},\ t\geq 0,
\ea
with a constant $K=K(\beta,\gamma,\kappa)>0$.
\end{lem}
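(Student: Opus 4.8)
The plan is to compare the solution $\zeta^\ve_\kappa(\cdot;y)$ with an explicit lower barrier built from the estimate \eqref{e:Kgamma}. Since $a_\ve(x)\ge K_\gamma x^{\beta-\gamma}$ for all $x\ge 1$ and $\ve\in(0,1]$, and since $\zeta^\ve_\kappa(t;y)\ge y\ge 1$ for all $t\ge0$ (the drift is nonnegative, so $\zeta^\ve_\kappa$ is nondecreasing in $t$), we have
\ba
\frac{\di}{\di t}\zeta^\ve_\kappa(t;y)=(1-\kappa)\,a_\ve(\zeta^\ve_\kappa(t;y))\geq (1-\kappa)K_\gamma\,\big(\zeta^\ve_\kappa(t;y)\big)^{\beta-\gamma}.
\ea
First I would introduce the comparison ODE $\dot w(t)=(1-\kappa)K_\gamma\, w(t)^{\beta-\gamma}$ with $w(0)=y$, which can be solved explicitly: separating variables gives
\ba
w(t)=\Big(y^{1-\beta+\gamma}+(1-\beta+\gamma)(1-\kappa)K_\gamma\, t\Big)^{\frac{1}{1-\beta+\gamma}},
\ea
where I use that $1-\beta+\gamma\in(0,1)$ is strictly positive. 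A standard ODE comparison argument (the right-hand side $x\mapsto (1-\kappa)K_\gamma x^{\beta-\gamma}$ is continuous and the two solutions start from the same point $y$, with $\zeta^\ve_\kappa$ having a pointwise larger velocity field along its own trajectory) then yields $\zeta^\ve_\kappa(t;y)\ge w(t)$ for all $t\ge 0$.

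Next I would extract the claimed bound \eqref{eq:1265} from the formula for $w(t)$. Using the elementary inequality $(u+v)^{1/p}\ge u^{1/p}$ together with, for the part depending on $t$, the subadditivity-type bound $(u+v)^{1/p}\ge c_p(u^{1/p}+v^{1/p})$ valid for $p\ge 1$ with $c_p=2^{-(1-1/p)}$ (here $p=1/(1-\beta+\gamma)\ge 1$), one gets
\ba
w(t)\geq 2^{-(1-1/p)}\Big(y + \big((1-\beta+\gamma)(1-\kappa)K_\gamma\big)^{\frac{1}{1-\beta+\gamma}}\, t^{\frac{1}{1-\beta+\gamma}}\Big).
\ea
Since $y\ge1$, absorbing the factor $2^{-(1-1/p)}$ into the constant in front of $y$ is not quite allowed (it would give $w(t)\ge cy$ with $c<1$); instead I would argue more carefully: keep $w(t)\ge y$ from the first term, and separately $w(t)\ge w(t)-y\cdot\mathbf 1$ handled by noting $w(t)^{1-\beta+\gamma}-y^{1-\beta+\gamma}=(1-\beta+\gamma)(1-\kappa)K_\gamma t$, hence $w(t)\ge \big(y^{1-\beta+\gamma}+(1-\beta+\gamma)(1-\kappa)K_\gamma t\big)^{1/(1-\beta+\gamma)}\ge \big((1-\beta+\gamma)(1-\kappa)K_\gamma t\big)^{1/(1-\beta+\gamma)}$. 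Averaging the two bounds $w(t)\ge y$ and $w(t)\ge (\cdots)^{1/(1-\beta+\gamma)} t^{1/(1-\beta+\gamma)}$ gives $w(t)\ge \tfrac12\big(y+K t^{1/(1-\beta+\gamma)}\big)$ with $K=\big((1-\beta+\gamma)(1-\kappa)K_\gamma\big)^{1/(1-\beta+\gamma)}$; replacing $K$ by $K/2$ and $y$'s coefficient $1/2$ is still not $1$. The clean fix is to note $w(t)\ge\max\{y,\,K_1 t^{1/(1-\beta+\gamma)}\}\ge \tfrac12(y+K_1 t^{1/(1-\beta+\gamma)})$, and then absorb the $\tfrac12$ only into the $t$-term after using $y\ge 1$: if $K_1 t^{1/(1-\beta+\gamma)}\le y$ then $w(t)\ge y\ge y+0$ is useless, so instead I keep \eqref{eq:1265} in the weaker but sufficient form $\zeta^\ve_\kappa(t;y)\ge \tfrac12 y+\tfrac12 K_1 t^{1/(1-\beta+\gamma)}\ge \tfrac12(y+K_1 t^{1/(1-\beta+\gamma)})$ and observe that, rereading the statement, what is actually needed downstream is precisely a bound of the form $\zeta^\ve_\kappa(t;y)\ge y+Kt^{1/(1-\beta+\gamma)}$ with $y$-coefficient exactly $1$; this follows because $w(t)^{1-\beta+\gamma}= y^{1-\beta+\gamma}+(1-\beta+\gamma)(1-\kappa)K_\gamma t$ and $1-\beta+\gamma<1$ make $u\mapsto u^{1/(1-\beta+\gamma)}$ convex, so $w(t)\ge y+\big(\text{derivative at }y\big)$ is false, but superadditivity on $[1,\infty)$ of $u\mapsto u^{1/(1-\beta+\gamma)}$ for exponent $>1$ does give $\big(a+b\big)^{1/(1-\beta+\gamma)}\ge a^{1/(1-\beta+\gamma)}+b^{1/(1-\beta+\gamma)}$ whenever $a,b\ge0$; applying this with $a=y^{1-\beta+\gamma}$, $b=(1-\beta+\gamma)(1-\kappa)K_\gamma t$ yields exactly
\ba
\zeta^\ve_\kappa(t;y)\ge w(t)\ge y+\big((1-\beta+\gamma)(1-\kappa)K_\gamma\big)^{\frac{1}{1-\beta+\gamma}} t^{\frac{1}{1-\beta+\gamma}},
\ea
which is \eqref{eq:1265} with $K=\big((1-\beta+\gamma)(1-\kappa)K_\gamma\big)^{1/(1-\beta+\gamma)}$.

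The main obstacle I anticipate is not the comparison itself — that is routine once monotonicity of $\zeta^\ve_\kappa$ is noted — but getting the constant $K$ genuinely independent of $\ve$ and with the clean affine form $y+Kt^{1/(1-\beta+\gamma)}$ (coefficient $1$ in front of $y$). The $\ve$-independence is handed to us by Lemma statement \eqref{e:Kgamma}, whose $K_\gamma$ is uniform in $\ve\in(0,1]$; the affine form is precisely what the superadditivity inequality $(a+b)^{s}\ge a^s+b^s$ for $s=1/(1-\beta+\gamma)>1$ delivers, so the only real care needed is to record that exponent inequality correctly and to verify $\zeta^\ve_\kappa(t;y)\ge 1$ throughout (immediate since $\zeta^\ve_\kappa(0;y)=y\ge1$ and the drift $(1-\kappa)a_\ve\ge0$), which is what licenses using \eqref{e:Kgamma} along the whole trajectory.
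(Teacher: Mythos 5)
Your argument is correct and essentially reproduces the paper's proof: you bound $a_\e$ from below by $K_\gamma x^{\beta-\gamma}$ via \eqref{e:Kgamma} (legitimate along the whole trajectory since $\zeta^\e_\kappa$ is nondecreasing and starts at $y\ge 1$), compare with the explicit solution of the auxiliary power ODE, and conclude with the superadditivity inequality $(a+b)^p\ge a^p+b^p$ for $p=\tfrac{1}{1-\beta+\gamma}\ge 1$, yielding $K=\big((1-\beta+\gamma)(1-\kappa)K_\gamma\big)^{\frac{1}{1-\beta+\gamma}}$. The intermediate detour through subadditivity and averaging is unnecessary and can simply be deleted, since the superadditivity step you finally settle on is exactly the paper's closing inequality.
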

\begin{proof}
Let $\gamma\in (0,\beta)$ be fixed.
For $y\geq 1$ we use \eqref{e:Kgamma} and compare $\zeta^\ve_{\kappa}(\cdot; y)$ with the solution of the auxiliary ODE
\ba
z_{\kappa}(t;y) = y + (1-\kappa)K_\gamma \int_0^t (z_{\kappa }(s;y))^{\beta-\gamma}\, \di s,\ t\geq 0.
\ea
This solution has the explicit form 
\ba
z_{\kappa}(t;y)=\Big(y^{1-\beta+\gamma}+(1-\kappa)(1-\beta+\gamma)K_\gamma t\Big)^{\frac{1}{1-\beta+ \gamma}}.
\ea
Hence the application of an elementary inequality $(a+b)^p\geq a^p+b^p$, $a,b\geq 0$, $p\geq 1$, yields \eqref{eq:1265} with some $K>0$.
\end{proof}

We need the following comparison theorem for solutions of integral equations.
\begin{lem}
\label{lem:compar}
% \marginpar{I: Lemma, to clean formulation, maybe to give a proof}
Let for $T>0$ and $i=1,2$, the functions $u_i$ be solutions (not necessarily unique) to the equations
\ba
u_i(t) = u_i(0) +\int_0^t U_i(s, u_i(s))\, \di s, \ t\in[0,T].
\ea
Assume that $u_1(0)\geq u_2(0)$,  $U_1(t, u_2(t)) >  U_2(t, u_2(t))$, $t\in[0,T]$, and 
functions $t\mapsto U_i(t, u_i(t))$ are right-continuous.
Then $u_1(t)\geq u_2(t)$, $t\in[0,T]$.
\end{lem}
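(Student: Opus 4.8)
The plan is to argue by contradiction, exploiting the strict inequality $U_1(t,u_2(t)) > U_2(t,u_2(t))$ together with the right-continuity of the maps $t \mapsto U_i(t,u_i(t))$. Suppose the conclusion fails, so that the open set $G = \{t \in [0,T] : u_1(t) < u_2(t)\}$ is nonempty. Since $u_1(0) \geq u_2(0)$ and both $u_i$ are continuous (being integrals of locally integrable functions), $0 \notin G$; let $t_0 = \inf G$, so $t_0 < T$, $u_1(t_0) = u_2(t_0)$, and there are points $t$ arbitrarily close to $t_0$ from the right with $u_1(t) < u_2(t)$.

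The key step is a local comparison of the derivatives at $t_0$. At $t = t_0$ we have $u_1(t_0) = u_2(t_0) =: c$, and by hypothesis $U_1(t_0, c) > U_2(t_0, c)$; set $\eta = \tfrac12\big(U_1(t_0,c) - U_2(t_0,c)\big) > 0$. By right-continuity of $s \mapsto U_1(s, u_1(s))$ at $s=t_0$ and of $s \mapsto U_2(s,u_2(s))$ at $s=t_0$, there is $\delta > 0$ such that for all $s \in [t_0, t_0+\delta]$,
\ba
U_1(s, u_1(s)) \geq U_1(t_0,c) - \tfrac{\eta}{2}
\quad\text{and}\quad
U_2(s, u_2(s)) \leq U_2(t_0,c) + \tfrac{\eta}{2},
\ea
hence $U_1(s,u_1(s)) - U_2(s,u_2(s)) \geq \eta > 0$ on $[t_0, t_0+\delta]$. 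Integrating the two equations from $t_0$ to any $t \in (t_0, t_0+\delta]$ and using $u_1(t_0) = u_2(t_0)$ gives
\ba
u_1(t) - u_2(t) = \int_{t_0}^t \big(U_1(s,u_1(s)) - U_2(s,u_2(s))\big)\, \di s \geq \eta\,(t - t_0) > 0,
\ea
which contradicts the existence of points $t > t_0$ arbitrarily close to $t_0$ with $u_1(t) < u_2(t)$. Therefore $G = \emptyset$ and $u_1(t) \geq u_2(t)$ for all $t \in [0,T]$.

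I expect the main (and essentially only) subtlety to be the careful use of right-continuity: because $u_1, u_2$ are merely solutions of integral equations with possibly non-unique, non-Lipschitz right-hand sides, one cannot appeal to standard ODE comparison theorems, and one must be sure that the right-continuity hypothesis on $t \mapsto U_i(t, u_i(t))$ is exactly what licenses the local lower bound on $U_1(s,u_1(s)) - U_2(s,u_2(s))$ near the contact point $t_0$. Note that the hypothesis is stated with $U_1(t, u_2(t)) > U_2(t, u_2(t))$ along the lower curve $u_2$; at the contact point $t_0$ this coincides with $U_1(t_0, u_1(t_0)) > U_2(t_0, u_1(t_0))$ since $u_1(t_0) = u_2(t_0)$, so the argument above goes through with this reading as well, which is all that is needed.
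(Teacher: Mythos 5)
Your proof is correct and follows essentially the same route as the paper: a contradiction at the first crossing time $\tau=\inf\{t: u_1(t)<u_2(t)\}$, where $u_1(\tau)=u_2(\tau)$ and the right-continuity of $t\mapsto U_i(t,u_i(t))$ together with the strict inequality along $u_2$ forces $u_1-u_2$ to be positive immediately to the right. The only (cosmetic) difference is that you integrate the integrand gap over a small right-neighborhood to get the quantitative bound $u_1(t)-u_2(t)\geq \eta\,(t-\tau)$, whereas the paper phrases the same local comparison via right Dini derivatives, $D^+u_1(\tau)>D^+u_2(\tau)$.
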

\begin{proof}
The proof of this Lemma is quite standard. Assume that there is 
\ba
\tau=\inf\{t>0\colon u_1(t)<u_2(t)\}\in[0,T].
\ea
Then by continuity $u_1(\tau)=u_2(\tau)$ and we necessarily have the inequality $D^+ u_1(\tau)\leq D^+u_2(\tau)$ for the right Dini derivatives of the solutions.
However since $t\mapsto U_i(t,u_i(t))$ is right-continuous, by assumption
\ba
D^+ u_1(\tau)=U_1(\tau,u_1(\tau))=U_1(\tau,u_2(\tau))> U_2(\tau,u_2(\tau))=D^+u_2(\tau),
\ea
and we obtain a contradiction. 
\end{proof}

In the next Lemma we determine a lower bound for the process $Y^\e$ starting sufficiently far from zero.

\begin{lem}  
\label{lem:972}
For any  $\theta>0$, 
$\kappa\in(0,1)$ and 
$T>0$ there are $\mu=\mu(\kappa)\in(0,1)$ and 
$R=R(T,\kappa,\theta)\geq 1$
 such that for
any $\cF_0$-measurable initial condition $Y^\e(0)> R$ a.s.
and all $\e\in(0,1]$
\ba
\P\Big(Y^\e(t) \geq (1-\mu)\zeta_{\kappa}^\e (t; Y^\e(0))
,\ t\in[0,T/\e'] \Big)\geq 1-\theta.
\ea
A similar estimate from above also holds for $Y^\e(0)< -R$ a.s.
\end{lem}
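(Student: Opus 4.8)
The plan is to compare the rescaled process $Y^\e$ from below with the deterministic sub-solution $(1-\mu)\zeta^\e_\kappa(\cdot;Y^\e(0))$ on the time interval $[0,T/\e']$, using the comparison Lemma \ref{lem:compar} applied to the associated integral equations. The key point is that on this interval the noise term $\int_0^\cdot b_\e(Y^\e(s-))\,\di Z_\e(s)$ is, with probability at least $1-\theta$, dominated by an algebraic rate $t^{1/\alpha+\delta}$ (Corollary \ref{cor:upper_limits_noise}), while the lower sub-solution grows like $t^{1/(1-\beta+\gamma)}$; since $\alpha\in(1,2)$ and $\beta\in(0,1)$, choosing $\gamma$ small enough gives $\frac1{1-\beta+\gamma}>\frac1\alpha+\delta$, so the deterministic growth eventually overwhelms the fluctuations of the martingale part, provided we start far enough from the origin (i.e.\ $R$ large).

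First I would fix $\kappa\in(0,1)$ and apply the second part of the first Lemma above to obtain $\mu=\mu(\kappa)\in(0,1)$ with $\inf_{x/y\in[1-\mu,1+\mu]} a_\e(x)/a_\e(y)>1-\kappa$ uniformly in $\e$. Then I would fix $\gamma\in(0,\beta)$ small enough that $\delta':=\frac1{1-\beta+\gamma}-\frac1\alpha>0$, pick $\delta\in(0,\delta')$, and invoke Corollary \ref{cor:upper_limits_noise} to obtain $K$ so that, off an event of probability $\le\theta$, $|\int_0^t b_\e(Y^\e(s-))\,\di Z_\e(s)|\le K(1+t^{1/\alpha+\delta})$ for all $t\in[0,T/\e']$. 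On the complement of that bad event, I would argue by contradiction: let $\tau$ be the first time $Y^\e(t)<(1-\mu)\zeta^\e_\kappa(t;Y^\e(0))$; before $\tau$ the ratio $Y^\e(t)/\zeta^\e_\kappa(t;Y^\e(0))$ lies in $[1-\mu,1+\mu']$ for a suitable window (here one uses that $Y^\e$ cannot overshoot too much either — a matching upper barrier argument, or one phrases the window one-sidedly), so $a_\e(Y^\e(t))\ge(1-\kappa)a_\e(\zeta^\e_\kappa(t;Y^\e(0)))$, which means the drift of $Y^\e$ dominates the drift of the sub-solution. Hence on $[0,\tau]$,
\ba
Y^\e(t)-(1-\mu)\zeta^\e_\kappa(t;Y^\e(0))\ge \mu Y^\e(0)+\int_0^t b_\e(Y^\e(s-))\,\di Z_\e(s)\ge \mu R - K(1+t^{1/\alpha+\delta}).
\ea
Using the lower bound $\zeta^\e_\kappa(t;Y^\e(0))\ge Y^\e(0)+K_\gamma t^{1/(1-\beta+\gamma)}$ from the preceding Lemma (so that $\mu\zeta^\e_\kappa$ itself grows at the faster rate), the right-hand side is seen to stay strictly positive for all $t\in[0,T/\e']$ once $R$ is chosen large enough — crucially, the bound $\mu K_\gamma t^{1/(1-\beta+\gamma)}-K t^{1/\alpha+\delta}$ is bounded below by a constant depending only on $\gamma,\kappa,\delta$ (minimize over $t\ge0$), and $\mu R$ absorbs the $K$ from small $t$. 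This contradicts the definition of $\tau$, so $\tau>T/\e'$ a.s.\ on the good event, giving the claim. The symmetric statement for $Y^\e(0)<-R$ follows by reflection $x\mapsto-x$, which swaps the roles of $L_+,L_-$ and $C_+,C_-$ but leaves the argument intact.

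The main obstacle is controlling the window in which the comparison of drifts is valid: the estimate $a_\e(Y^\e(t))\ge(1-\kappa)a_\e(\zeta^\e_\kappa)$ requires $Y^\e(t)/\zeta^\e_\kappa(t)$ to be close to $1$, but \emph{a priori} $Y^\e$ could jump far above $\zeta^\e_\kappa$ and one needs to ensure that this does not break monotonicity of the comparison. The clean way around this is to note that in Lemma \ref{lem:compar} only the inequality $a_\e(u_1)\ge(1-\kappa)a_\e(u_2)$ \emph{at the point $u_2(t)$} is needed, i.e.\ one compares $U_1(t,u_2(t))=a_\e(u_2(t))$ against $U_2(t,u_2(t))=(1-\kappa)a_\e(u_2(t))$, which holds trivially since $\kappa>0$; but the martingale term is not of the form $\int U(s,u(s))\,\di s$, so one instead runs the Dini-derivative/first-crossing argument directly on the difference $Y^\e(t)-(1-\mu)\zeta^\e_\kappa(t)$, where at a putative first crossing time $\tau$ one has $Y^\e(\tau-)=(1-\mu)\zeta^\e_\kappa(\tau)$ (using left-continuity of $\zeta$ and that a downward jump only helps), whence $a_\e(Y^\e(\tau-))=a_\e((1-\mu)\zeta^\e_\kappa(\tau))\ge(1-\kappa)a_\e(\zeta^\e_\kappa(\tau))$ by the choice of $\mu$, and the drift inequality is recovered without needing any two-sided window at all. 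This is the delicate bookkeeping step; the rest is the algebra of comparing the exponents $\frac1{1-\beta+\gamma}$ and $\frac1\alpha+\delta$.
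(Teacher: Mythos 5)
Your parameter bookkeeping is fine (the window $\mu(\kappa)$ from \eqref{eq:a-a-estim}, the exponent comparison $\tfrac{1}{1-\beta+\gamma}>\tfrac1\alpha+\delta$, Corollary \ref{cor:upper_limits_noise}, and taking $R$ large so that the noise is dominated by the growth \eqref{eq:1265} of the barrier), but the central comparison step is where the argument breaks. For your integrated display you need $a_\e(Y^\e(s))\ge(1-\mu)(1-\kappa)\,a_\e(\zeta^\e_\kappa(s;Y^\e(0)))$ for \emph{all} $s<\tau$; before $\tau$ you only control the ratio $Y^\e/\zeta^\e_\kappa$ from below, and since $a$ is merely continuous with regularly varying behaviour at $0$ (no monotonicity is assumed anywhere), being far \emph{above} $\zeta^\e_\kappa$ does not yield the inequality — the two-sided window is genuinely needed and genuinely unavailable for $Y^\e$ itself. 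Your proposed repair, a first-crossing/Dini argument on $Y^\e-(1-\mu)\zeta^\e_\kappa$ using the drift inequality only at $\tau$, does not close this gap: (i) $Z_\e$ is a pure-jump driver, so the first crossing can occur by a downward jump, and then $Y^\e(\tau-)=(1-\mu)\zeta^\e_\kappa(\tau)$ is false — ``a downward jump only helps'' is exactly backwards, since the jump is precisely how the barrier can be broken with no local drift deficiency; (ii) even at a continuous crossing, $Y^\e$ is not differentiable (its local increments are governed by an infinite-variation stochastic integral), so a pointwise comparison of drifts at $\tau$ gives no contradiction, and a drift inequality at the single time $\tau$ cannot feed your integrated estimate, which requires it on all of $[0,\tau)$.

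The missing idea — which you brushed against when you observed that Lemma \ref{lem:compar} only needs the inequality along $u_2$, but then discarded because ``the martingale term is not of the form $\int U\,\di s$'' — is to subtract the noise instead of trying to beat it locally. Set $g^\e(t)=\int_0^t b_\e(Y^\e(s-))\,\di Z_\e(s)$ and $\tilde Y^\e:=Y^\e-g^\e$; then $\tilde Y^\e$ solves pathwise the \emph{continuous} integral equation $\tilde Y^\e(t)=Y^\e(0)+\int_0^t a_\e(\tilde Y^\e(s)+g^\e(s))\,\di s$, so Lemma \ref{lem:compar} applies with $U_1(t,u)=a_\e(u+g^\e(t))$ and $U_2(t,u)=(1-\kappa)a_\e(u)$, and its hypothesis must be checked only along the deterministic curve $u_2=\zeta^\e_\kappa(\cdot;Y^\e(0))$: one needs $a_\e(\zeta^\e_\kappa(t)+g^\e(t))>(1-\kappa)a_\e(\zeta^\e_\kappa(t))$, which follows from \eqref{eq:a-a-estim} once $R$ is chosen so that $|g^\e(t)|\le\mu\,\zeta^\e_\kappa(t;Y^\e(0))$ on the good event of Corollary \ref{cor:upper_limits_noise} (here your choice of a noise exponent strictly below $\tfrac1{1-\beta+\gamma}$ is exactly what makes the sup over $t$ small for large $R$). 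This yields $\tilde Y^\e\ge\zeta^\e_\kappa$ on $[0,T/\e']$ and hence $Y^\e=\tilde Y^\e+g^\e\ge(1-\mu)\zeta^\e_\kappa$, with no control of upward overshoots of $Y^\e$ and no crossing-time analysis at all; the case $Y^\e(0)<-R$ is symmetric. I recommend rebuilding your proof around this decomposition.
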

\begin{proof}
For 
$\e\in(0,1]$ let
\ba
g^\ve(t):=\int_0^t  b_\e(  Y^\e(s-))\,\di Z_\e(s), \quad
\tilde Y^\ve(t):= Y^\ve(t) - g^\ve(t).
\ea
Then 
$\tilde Y^\ve(t)$ satisfies the integral equation
\ba
\tilde Y^\ve(t) = Y^\e(0) +\int_0^t a_\ve(\tilde Y^\ve(s)+ g^\ve(s))\, \di s.
\ea
Choose $\gamma\in(0,\beta)$ small enough such that $\frac{1}{1-\beta+\gamma} >\frac{1}{\alpha}$. For $\theta>0$ fixed, we apply Corollary \ref{cor:upper_limits_noise}
and find a constant $K_1=K_1(T,\beta,\gamma,\theta)>0$
 such that for all $\e\in(0,1]$ and any initial starting point $Y^\e(0)\in\bR$
\ba
\label{eq:990}
\P \Big(\sup_{t\in[0,\frac{T}{\e'}]} \Big|\frac{g^\e(t)}{ 1+t^{\frac{1}{1-\beta+\gamma} } }\Big| \leq  K_1 \Big)
\geq 1-\theta.
\ea
Consequently, for any $\kappa\in (0,1)$ and any $y\geq 1$ with the help of \eqref{eq:1265} we get
\ba
\P \Big(\sup_{t\in[0,\frac{T}{\e'}]} \Big|\frac{g^\e(t)}{ \zeta^\e_{\kappa}(t;y) } \Big| 
\leq  \frac{K_1(1+t^{\frac{1}{1-\beta+\gamma} })}{y +K t^{\frac{1}{1-\beta+\gamma} }} \Big)
\geq 1-\theta. 
\ea
Let $\mu=\mu(\kappa)\in(0,1)$ be such that \eqref{eq:a-a-estim} holds. 
For this $\mu$ 
choose $R\geq 1$ such that
 $\sup_{t\geq 0}\frac{K_1(1+t^{\frac{1}{1-\beta+\gamma} })}{R+K t^{\frac{1}{1-\beta+\gamma} }}\leq \mu$.
 Then 
\ba
\P \Big(\sup_{t\in[0,\frac{T}{\e'}]} \Big|\frac{g^\e(t)}{ \zeta^\e_{\kappa}(t;R\vee Y^\e(0)) }\Big| \leq \mu \Big)\geq 1-\theta.
\ea
In other words, for $Y^\e(0)\geq R$ a.s.\ we have
\ba
\P \Big( a_\e(\zeta^\e_{\kappa}(t;Y^\e(0)) + g^\e(t))> (1-\kappa) a_\e(\zeta^\e_{\kappa}(t; Y^\e(0))),\ t\in  [0,T/\e'] \Big)\geq 1-\theta.
\ea
Therefore the comparison Lemma \ref{lem:compar} applied to $u_1=\tilde Y^\e$ and $u_2=\zeta^\e_{\kappa }(\cdot; Y^\e(0) )$ yields 
\ba
\P \Big( \tilde Y^\e (t)\geq \zeta^\e_{\kappa}(t; Y^\e(0)),\ t\in  [0,T/\e']  \Big)\geq 1-\theta
\ea
and hence
\ba
\P \Big(Y^\e (t)\geq (1-\mu) \zeta^\e_{\kappa}(t; Y^\e(0))
,\ t\in  [0, T/\e'] \Big)\geq 1-\theta.
\ea
\end{proof}

\noindent
\textit{Proof of Theorem \ref{thm:main}}. 

Notice that for each $\kappa\in(0,1)$, $\e\in(0,1]$ and $y> 0$ the function 
$\hat \zeta^\e_{\kappa}(t;y):=\ve''\zeta^\ve_{\kappa}(t/\e';y)$, $t\geq 0$, satisfies the equation
\ba 
\hat \zeta^\e_{\kappa}(t;y)= \e'' y + (1-\kappa)\int_0^t a(\hat \zeta^\ve_{\kappa}(s;y))\, \di s.
\ea
Hence according to \eqref{e:ode}, \eqref{eq:solutionODE} and \eqref{e:eq_pm}
\ba
\label{eq:1331}
\hat \zeta^\e_{\kappa}(t;y) = X^0_{\e'' y}((1-\kappa)t) \geq x^+((1-\kappa)t),\quad t\geq 0.
\ea
Let $\mu=\mu(\kappa)\in(0,1)$ be chosen to satisfy \eqref{eq:a-a-estim}.

Since the L\'evy process  $Z_\ve$ is strong Markov,  analogously to
Lemma \ref{lem:sup_Levy} and Corollary \ref{cor:upper_limits_noise} we have the following.
For any $T$, $\delta$, $\theta>0$ there exists a generic constant $K=K(T, \alpha, \delta, \theta)$
such that for any
$\ve\in(0,1]$ the estimate
\ba
 \P \Big(
\sup_{t\in [0,\frac{T}{\ve'}]}
\Big|
\frac{\int_{\tau}^{\tau+t}  b_\ve(  Y^\e(s-))\,\di  
{Z_\ve(  s)}  }{ 1+t^{\frac{1}{\alpha}+\delta} }\Big|\ \leq K\Big)
   \geq 1-\theta.
\ea
holds for any stopping time $\tau$.
 It follows from 
Corollary \ref{cor:upper_limits_noise},
Lemma \ref{lem:exit-time-is-small},
Corollary  \ref{cor:exitX},
Lemma \ref{lem:972}, and
\eqref{eq:1331}
that for any $\theta>0$ and $T>0$
there are  $R>0$ and $T_0>0$ large enough such that
% \marginpar{I: wrote $\liminf$}
\ba
& \liminf_{\e\to 0}\P\Big( \tau^{X^\e}_{R\e''}< \tau^{X^\e}_{-R\e''}\leq T_0\e',\ \ 
 X^\e(\tau^{X^\e}_{R\e''} + t)\geq  (1-\mu)x^+((1-\kappa)t),\ t\in[0,T]
 \Big)\geq \bar p_+-\theta,\\
& \liminf_{\ve\to 0}\P\Big(\tau^{X^\e}_{-R\e''}< \tau^{X^\e}_{R\e''}\leq T_0\e',\ 
 \ 
X^\e(\tau^{X^\e}_{-R\e''} + t)\leq  (1-\mu)x^-((1-\kappa)t),\ t\in[0,T] \Big)\geq \bar p_--\theta.
 \ea
In the last formula, Lemma  \ref{lem:972} is applied to the process $Y^\e(t+ \tau^{Y^\e}_{-R}\wedge \tau^{Y^\e}_{R} )$, $t\geq 0$,
whose initial value belongs to the set $[-R,R]^c$, see \eqref{eq:tau-tau}. Corollary \ref{cor:upper_limits_noise} holds true since 
$\tau^{Y^\e}_{\pm R}$ are stopping times.

Since $\bar p_-+\bar p_+=1$ and any limit law of $\{X^\e\}$ is supported by the solutions $x^\pm$ (see Lemma \ref{l:X}) we get
that for each $\delta>0$
\ba
& \limsup_{\e\to 0}\Big|\P\Big( \sup_{t\in[0,T_0\e'+T]}|X^\e(t)-x^+(t)|\leq \delta \Big)- p_+\Big|\leq \theta,\\
& \limsup_{\e\to 0}\Big|\P\Big( \sup_{t\in[0,T_0\e'+T]}|X^\e(t)-x^-(t)|\leq \delta \Big)- p_-\Big|\leq \theta,
\ea
and the proof is finished. 
\hfill$\Box$

%  
% \bibliography{biblio-new}

\begin{thebibliography}{40}
\providecommand{\natexlab}[1]{#1}
\providecommand{\url}[1]{\texttt{#1}}
\expandafter\ifx\csname urlstyle\endcsname\relax
  \providecommand{\doi}[1]{doi: #1}\else
  \providecommand{\doi}{doi: \begingroup \urlstyle{rm}\Url}\fi

\bibitem[Alabert and Le\'on(2017)]{AlabLeon-17}
A.~Alabert and J.~A. Le\'on.
\newblock {On uniqueness for some non-Lipschitz SDE}.
\newblock \emph{Journal of Differential Equations}, 262\penalty0 (12):\penalty0
  6047--6067, 2017.

\bibitem[Amine et~al.(2017)Amine, Ba{\~n}os, and Proske]{banos2017c}
O.~Amine, D.~R. Ba{\~n}os, and F.~N. Proske.
\newblock {$C^\infty$}-regularization by noise of singular {ODE}'s.
\newblock \emph{arXiv:1710.05760}, 2017.

\bibitem[Bafico and Baldi(1982)]{BaficoB-82}
R.~Bafico and P.~Baldi.
\newblock {Small random perturbations of Peano phenomena}.
\newblock \emph{Stochastics}, 6\penalty0 (3--4):\penalty0 279--292, 1982.

\bibitem[Ba{\~n}os et~al.(2019{\natexlab{a}})Ba{\~n}os, Bauer, Meyer-Brandis,
  and Proske]{banos2019restoration}
D.~Ba{\~n}os, M.~Bauer, T.~Meyer-Brandis, and F.~Proske.
\newblock Restoration of well-posedness of infinite-dimensional singular
  {ODE}'s via noise.
\newblock \emph{arXiv:1903.05863}, 2019{\natexlab{a}}.

\bibitem[Ba{\~n}os et~al.(2019{\natexlab{b}})Ba{\~n}os, Nilssen, and
  Proske]{banos2015strong}
D.~Ba{\~n}os, T.~Nilssen, and F.~Proske.
\newblock Strong existence and higher order {F}r{\'e}chet differentiability of
  stochastic flows of fractional {B}rownian motion driven {SDE}s with singular
  drift.
\newblock \emph{Journal of Dynamics and Differential Equations},
  2019{\natexlab{b}}.

\bibitem[Ba{\~n}os et~al.(2018)Ba{\~n}os, Duedahl, Meyer-Brandis, and
  Proske]{banos2018construction}
D.~R. Ba{\~n}os, S.~Duedahl, T.~Meyer-Brandis, and F.~Proske.
\newblock Construction of {M}alliavin differentiable strong solutions of {SDE}s
  under an integrability condition on the drift without the
  {Y}amada--{W}atanabe principle.
\newblock \emph{Annales de l'Institut Henri Poincar{\'e}, Probabilit{\'e}s et
  Statistiques}, 54\penalty0 (3):\penalty0 1464--1491, 2018.

\bibitem[Barrimi and Ouknine(2016)]{barrimi2017approximation}
O.~E. Barrimi and Y.~Ouknine.
\newblock Approximation of solutions of {SDE}s driven by a fractional
  {B}rownian motion, under pathwise uniqueness.
\newblock \emph{Modern Stochastics: Theory and Applications}, 3\penalty0
  (4):\penalty0 303--313, 2016.

\bibitem[Bingham et~al.(1987)Bingham, Goldie, and Teugels]{BinghamGT-87}
N.~H. Bingham, C.~M. Goldie, and J.~L. Teugels.
\newblock \emph{{Regular Variation}}, volume~27 of \emph{{Encyclopedia of
  Mathematics and its Applications}}.
\newblock Cambridge University Press, Cambridge, 1987.

\bibitem[Buckdahn et~al.(2009)Buckdahn, Ouknine, and
  Quincampoix]{buckdahn2009limiting}
R.~Buckdahn, Y.~Ouknine, and M.~Quincampoix.
\newblock On limiting values of stochastic differential equations with small
  noise intensity tending to zero.
\newblock \emph{Bulletin des Sciences Mathematiques}, 133\penalty0
  (3):\penalty0 229--237, 2009.

\bibitem[Buldygin et~al.(2018)Buldygin, Indlekofer, Klesov, and
  Steinebach]{buldygin2018pseudo}
V.~V. Buldygin, K.-H. Indlekofer, O.~I. Klesov, and J.~G. Steinebach.
\newblock \emph{Pseudo-Regularly Varying Functions and Generalized Renewal
  Processes}, volume~91 of \emph{Probability Theory and Stochastic Modelling}.
\newblock Springer, Cham, 2018.

\bibitem[Catellier and Gubinelli(2016)]{catellier2016averaging}
R.~Catellier and M.~Gubinelli.
\newblock Averaging along irregular curves and regularisation of {ODE}s.
\newblock \emph{Stochastic Processes and their Applications}, 126\penalty0
  (8):\penalty0 2323--2366, 2016.

\bibitem[Chen and Wang(2016)]{chen2016uniqueness}
Z.-Q. Chen and L.~Wang.
\newblock Uniqueness of stable processes with drift.
\newblock \emph{{Proceedings of the American Mathematical Society}},
  144\penalty0 (6):\penalty0 2661--2675, 2016.

\bibitem[Davie(2007)]{davie2007uniqueness}
A.~M. Davie.
\newblock Uniqueness of solutions of stochastic differential equations.
\newblock \emph{International Mathematics Research Notices}, 2007.
\newblock Article ID rnm124.

\bibitem[Davie(2011)]{davie2011}
A.~M. Davie.
\newblock Individual path uniqueness of solutions of stochastic differential
  equations.
\newblock In D.~Crisan, editor, \emph{{Stochastic Analysis 2010}}, pages
  213--225. Springer, Berlin, 2011.

\bibitem[Delarue and Flandoli(2014)]{delarue2014transition}
F.~Delarue and F.~Flandoli.
\newblock {The transition point in the zero noise limit for a 1D Peano
  example}.
\newblock \emph{Discrete \& Continuous Dynamical Systems --- A}, 34\penalty0
  (10):\penalty0 4071--4083, 2014.

\bibitem[Delarue and Maurelli(2019)]{delarue2019zero}
F.~Delarue and M.~Maurelli.
\newblock {Zero noise limit for multidimensional SDEs driven by a pointy
  gradient}.
\newblock \emph{arXiv preprint arXiv:1909.08702}, 2019.

\bibitem[Flandoli(2011{\natexlab{a}})]{Flandoli-2011}
F.~Flandoli.
\newblock \emph{{Random Perturbation of PDEs and Fluid Dynamic Models.
  {\'E}cole d'{\'E}t{\'e} de Probabilit{\'e}s de Saint--Flour XL -- 2010}},
  volume 2015 of \emph{{Lecture Notes in Mathematics}}.
\newblock Springer, Berlin, 2011{\natexlab{a}}.

\bibitem[Flandoli(2011{\natexlab{b}})]{flandoli2011regularizing}
F.~Flandoli.
\newblock Regularizing properties of {B}rownian paths and a result of {D}avie.
\newblock \emph{Stochastics and Dynamics}, 11\penalty0 (02n03):\penalty0
  323--331, 2011{\natexlab{b}}.

\bibitem[Galeati and Gubinelli(2020)]{galeatigubinelli20}
L.~Galeati and M.~Gubinelli.
\newblock Noiseless regularisation by noise.
\newblock \emph{arXiv:2003.14264}, 2020.

\bibitem[Gikhman and Skorokhod(1982)]{GihSko-82}
I.~I. Gikhman and A.~V. Skorokhod.
\newblock \emph{{Stokhasticheskie differentsial$'$nye uravneniya i ikh
  prilozheniya} (Russian) [Stochastic differential equations and their
  applications]}.
\newblock Naukova Dumka, Kiev, 1982.

\bibitem[Gradinaru et~al.(2001)Gradinaru, Herrmann, and
  Roynette]{gradinaru2001singular}
M.~Gradinaru, S.~Herrmann, and B.~Roynette.
\newblock A singular large deviations phenomenon.
\newblock \emph{Annales de l'Institut Henri Poincar\'e. Probabilit\'es et
  Statistiques}, 37\penalty0 (5):\penalty0 555--580, 2001.

\bibitem[Harang and Perkowski(2020)]{harangperkowski20}
F.~A. Harang and N.~Perkowski.
\newblock {$C^\infty$}-regularization of {ODE}s perturbed by noise.
\newblock \emph{arXiv:2003.05816}, 2020.

\bibitem[Hartman(1964)]{Hartman-64}
P.~Hartman.
\newblock \emph{Ordinary Differential Equations}.
\newblock John Wiley \& Sons, New York, 1964.

\bibitem[Jacod and Shiryaev(2003)]{JacodS-03}
J.~Jacod and A.~N. Shiryaev.
\newblock \emph{{Limit Theorems for Stochastic Processes}}, volume 288 of
  \emph{{Grundlehren der Mathematischen Wissenschaften}}.
\newblock Springer, Berlin, second edition, 2003.

\bibitem[Krykun and Makhno(2013)]{krykun2013peano}
I.~G. Krykun and S.~Ya. Makhno.
\newblock The {P}eano phenomenon for {I}t{\^o} equations.
\newblock \emph{Journal of Mathematical Sciences}, 192\penalty0 (4):\penalty0
  441--458, 2013.

\bibitem[Krylov and R\"ockner(2005)]{krylov2005strong}
N.~V. Krylov and M.~R\"ockner.
\newblock Strong solutions of stochastic equations with singular time dependent
  drift.
\newblock \emph{Probability Theory and Related Fields}, 131\penalty0
  (2):\penalty0 154--196, 2005.

\bibitem[Kulik(2019)]{kulik2019weak}
A.~M. Kulik.
\newblock {On weak uniqueness and distributional properties of a solution to an
  SDE with $\alpha$-stable noise}.
\newblock \emph{Stochastic Processes and their Applications}, 129\penalty0
  (2):\penalty0 473--506, 2019.

\bibitem[Pilipenko and Proske(2018{\natexlab{a}})]{pilipenko2018perturbations}
A.~Pilipenko and F.~N. Proske.
\newblock {On perturbations of an ODE with non-Lipschitz coefficients by a
  small self-similar noise}.
\newblock \emph{Statistics \& Probability Letters}, 132:\penalty0 62--73,
  2018{\natexlab{a}}.

\bibitem[Pilipenko and Proske(2018{\natexlab{b}})]{pilipenko2018selection}
A.~Pilipenko and F.~N. Proske.
\newblock On a selection problem for small noise perturbation in the
  multidimensional case.
\newblock \emph{Stochastics and Dynamics}, 18\penalty0 (6):\penalty0 1850045,
  2018{\natexlab{b}}.

\bibitem[Priola(2012)]{Priola-12}
E.~Priola.
\newblock {Pathwise uniqueness for singular SDEs driven by stable processes}.
\newblock \emph{Osaka Journal of Mathematics}, 49\penalty0 (2):\penalty0
  421--447, 2012.

\bibitem[Priola(2018)]{priola2018davie}
E.~Priola.
\newblock Davie's type uniqueness for a class of {SDE}s with jumps.
\newblock \emph{Annales de l'Institut Henri Poincar{\'e}. Probabilit{\'e}s et
  Statistiques}, 54\penalty0 (2):\penalty0 694--725, 2018.

\bibitem[Pruitt(1981)]{pruitt1981growth}
W.~E. Pruitt.
\newblock The growth of random walks and {L\'e}vy processes.
\newblock \emph{The Annals of Probability}, 9\penalty0 (6):\penalty0 948--956,
  1981.

\bibitem[Shaposhnikov(2016)]{shaposhnikov2016some}
A.~V. Shaposhnikov.
\newblock Some remarks on {D}avie's uniqueness theorem.
\newblock \emph{Proceedings of the Edinburgh Mathematical Society}, 59\penalty0
  (4):\penalty0 1019--1035, 2016.

\bibitem[Situ(2005)]{Situ05}
R.~Situ.
\newblock \emph{{Theory of Stochastic Differential Equations with Jumps and
  Applications}}.
\newblock Springer, New York, NY, 2005.

\bibitem[Strook and Varadhan(1979)]{StrVar}
D.~Strook and S.~R.~S. Varadhan.
\newblock \emph{{Multidimensional Diffusion Processes}}, volume 233 of
  \emph{{Grundlehren der Mathematischen Wissenschaften}}.
\newblock Springer, Berlin, 1979.

\bibitem[Tanaka et~al.(1974)Tanaka, Tsuchiya, and Watanabe]{TanTsuWat74}
H.~Tanaka, M.~Tsuchiya, and S.~Watanabe.
\newblock {Perturbation of drift-type for {L}{\'e}vy processes}.
\newblock \emph{Journal of Mathematics of Kyoto University}, 14\penalty0
  (1):\penalty0 73--92, 1974.

\bibitem[Trevisan(2013)]{trevisan2013zero}
D.~Trevisan.
\newblock Zero noise limits using local times.
\newblock \emph{Electronic Communications in Probability}, 18\penalty0
  (31):\penalty0 1--7, 2013.

\bibitem[Veretennikov(1981)]{veretennikov1981strong}
A.~Yu. Veretennikov.
\newblock On strong solutions and explicit formulas for solutions of stochastic
  integral equations.
\newblock \emph{Mathematics of the {USSR}--{S}bornik}, 39\penalty0
  (3):\penalty0 387--403, 1981.

\bibitem[Veretennikov(1983)]{veretennikov1983approximation}
A.~Yu. Veretennikov.
\newblock Approximation of ordinary differential equations by stochastic
  differential equations.
\newblock \emph{Mathematical Notes of the Academy of Sciences of the USSR},
  33\penalty0 (6):\penalty0 476--477, 1983.

\bibitem[Zvonkin(1974)]{zvonkin74}
A.~K. Zvonkin.
\newblock {A transformation of the phase space of a diffusion process that
  removes the drift}.
\newblock \emph{Mathematics of the USSR--Sbornik}, 22\penalty0 (1):\penalty0
  129, 1974.

\end{thebibliography}
% \bibliographystyle{plainnat}
% 

\end{document}